\def \k{\mathbbm{k}}
\numberwithin{equation}{section}
\newtheorem{theorem}{Theorem}[section]
\newtheorem{lemma}[theorem]{Lemma}
\newtheorem{proposition}[theorem]{Proposition}
\newtheorem{definition}[theorem]{Definition}
\newtheorem{remark}[theorem]{Remark}
\begin{document}
\title{Finite dimensional Nichols algebras over $H_{c: \sigma_{0}}$ of Kashina}
\thanks{$^\dag$Supported by NSFC 12171230.}

\author[Miantao Liu, Gongxiang Liu, Kun Zhou]{Miantao Liu, Gongxiang Liu, Kun Zhou}
\address{Department of Mathematics, Nanjing University, Nanjing 210093, China}
\email{dg1921007@smail.nju.edu.cn}
\email{gxliu@nju.edu.cn}
\address{Yanqi Lake Beijing Institute of Mathematical Sciences and Applications, Beijing 101408, China}
\email{kzhou@bimsa.cn}

\subjclass[2010]{16E65, 16T05 (primary), 16P40, 16S34 (scendary)}
\keywords{Abelian extension, Yetter-Drinfel'd modules, Hopf algebras, Nichols algebras.}

\date{}
\maketitle
\begin{abstract}
Let $H$ be the Hopf algebra $H_{c: \sigma_{0}}$ of Kashina [J. Algebra, 232(2000),pp.617-663]. We give all simple Yetter-Drinfel'd modules $V$ over $H$, then classify all finite-dimensional Nichols algebras of $V$. The finite dimensional Nichols algebras of diagonal type are either  $A_{1}, A_{2}$ or quantum planes, and non-diagonal type ones are $8$ or $16$ dimensional.
\end{abstract}

\section{Introduction}
 Nichols algebras appear naturally in the classifications of pointed Hopf algebras~\cite{Andru10} and have many applications in other fields such as conformal field theory \cite{SA}, Lie algebra \cite{ARF} etc. Among of them, Nichols algebras of diagonal type have attracted a lot of attention and were understood quite well now \cite{Andru19,Andru17,Andru172,Andru21,Hecken06, Hecken07, Hecken09,Hecken18}. Furthermore, authors are also interested in finite-dimensional Nichols algebras of other types. Meanwhile, Drinfel'd double of Hopf algebras was introduced by Drinfel'd in order to study quasitriangular Hopf algebras. The representation theory of Drinfel'd double of Hopf algebra  has been studied for many years. The relation between the category of left-right Yetter-Drinfel'd modules over $H$ and the category of left modules over the Drinfel'd double $\mathcal{D}(H)$ has been considered in the paper~\cite{Maji91}. If $H$ is a finite-dimensional Hopf algebra, then these two categories are equivalent.

Let $H$ be the 16 dimensional Hopf algebra $H_{c: \sigma_{0}}$ of Kashina \cite{Kashina}. In this paper, we start with investigating all simple Yetter-Drinfel'd modules $V$ over $H$. We proved that there are 88 non-isomorphic simple left $\mathcal{D}(H)$-modules, and 32 of them are one-dimensional and the other 56 ones are two-dimensional. Then we classify all finite-dimensional Nichols algebras of simple Yetter-Drinfel'd modules $V$ over $H$. We find once again the Nichols algebras of non-diagonal type which were studied by Andruskiewitsch~\cite{AG} and non-diagonal type ones are $8$, $16$ or infinite dimensional (see Lemma~\ref{theorem5.4}).

The paper is organized as follows. In Section 2, we review some definitions, notations and results related to Yetter-Drinfel'd modules, Drinfel'd double, Nichols algebra and the Hopf algebra $H$. In Section 3, we describe the Drinfel'd double $\mathcal{D}(H)$ of $H$. We devote Section 4 and 5 to giving our main results (see Theorem \ref{theorem4.7} and Theorem \ref{theorem5.6}).
Throughout the paper, we work over an algebraically closed field $\Bbbk$ of characteristic zero. All Hopf algebras in this paper are finite dimensional. Our references for the theory of Hopf algebras are \cite{Mon1993,Rad12}. For a Hopf algebra $H$, the antipode of $H$ will be denoted by $\mathcal{S}$.

\section{Preliminaries}
In this section, we recall some well-known facts about Yetter-Drinfel'd modules, Drinfel'd double and Nichols algebra. We also describe the Hopf algebra $H_{c: \sigma_{0}}$ as an abelian extension.
\subsection{Some concepts}
 All concepts are standard, and one can find the detailed analysis about them in many references (see \cite{Andruskiewitsch,Rad12}).
\begin{definition} Let $V$ be a vector space over $\Bbbk$ and $c\in {\rm Aut}(V\otimes V)$. The pair $(V,c)$ is called a braided vector space if $c$ satisfies
$$(c\otimes id)(id \otimes c)(c\otimes id)=(id \otimes c)(c\otimes id)(id \otimes c).$$
Given $\theta\in \Bbb N$ and $I_{\theta}=\{1,2,\dots \theta\}$. The braided vector space $(V,c)$ of dimension $\theta$ is of diagonal type if there exist a basis $\{x_{i}\mid i\in I_{\theta}\}$ of $V$ and $q=(q_{ij})_{i,j\in I_{\theta}}\in \Bbbk^{\theta \times \theta}$ such that $c(x_{i}\otimes x_{j})=q_{ij}x_{j}\otimes x_{i}$ for all $i,j \in I_{\theta}$. In such case, $q=(q_{ij})_{i,j\in I_{\theta}}$ is called a braiding matrix of $(V, c)$.
\end{definition}

\begin{definition} Let $H$ be a Hopf algebra with invertible antipode $\mathcal{S}$,
\begin{enumerate}
\item A left-right Yetter-Drinfel'd module $(V,\cdot,\rho)$ over $H$ is a left $H$-module and a right $H$-comodule $\rho:V\rightarrow V\otimes H$ satisfying the compatibility condition $$\rho_{}(h\cdot v)=h_{2}\cdot v_{0}\otimes h_{3} v_{1}\mathcal{S}^{-1}(h_{1}),$$ for $h \in H, v \in V$. The category of left-right Yetter-Drinfel'd modules over $H$ is denoted by ${ }_{H} \mathcal{Y} \mathcal{D}^{H}$.

\item A left-left Yetter-Drinfel'd module $(V,\cdot,\rho_{l})$ is a left $H$-module and a left $H$-comodule $\rho_{l}:V\rightarrow H\otimes V$ satisfying the compatibility condition $$\rho_{l}(h\cdot v)=h_{1}v_{-1}\mathcal{S}(h_{3})\otimes h_{2}\cdot v_{0},$$ for $h \in H, v \in V.$ The category of left-left Yetter-Drinfel'd modules over $H$ is denoted by ${ }_{H}^{H} \mathcal{Y} \mathcal{D}$.
\end{enumerate}
\end{definition}

Let $H$ be a Hopf algebra over $\Bbbk$ with invertible antipode $\mathcal{S}$, the Drinfel'd double $\mathcal{D}(H)=H^{\ast cop}\otimes H$ is a Hopf algebra with tensor product coalgebra structure and algebra structure defined by~$(\phi \otimes h)(\phi^{\prime} \otimes h^{\prime})=\phi \langle \phi_{3}^{\prime}, h_{1}\rangle  \phi_{2}^{\prime} \otimes h_{2}\langle \phi_{1}^{\prime},\mathcal{S}^{-1}(h_{3})\rangle h^{\prime}, ~~\forall \phi, \phi^{\prime}\in H^{\ast}, h, h^{\prime}\in H$. The following well-known result is due to \cite{Maji91}.
\begin{lemma}\label{lemma2.3}
Let $H$ be a finite-dimensional Hopf algebra. Then the category $_{H} \mathcal{Y} \mathcal{D}^{H}$ of left-right Yetter-Drinfel'd modules can be identified with the category ${}_{\mathcal{D}(H)}M$ of left modules over the Drinfel'd double $\mathcal{D}(H)$.
\end{lemma}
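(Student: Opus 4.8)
The plan is to exhibit a pair of mutually inverse functors between ${}_{H}\mathcal{Y}\mathcal{D}^{H}$ and ${}_{\mathcal{D}(H)}M$ that are the identity on underlying vector spaces and on linear maps, so that the two categories are literally identified rather than merely equivalent. The key dictionary is the standard fact that, because $H$ is finite dimensional, a right $H$-comodule is the same datum as a left $H^{\ast}$-module: a coaction $\rho(v)=v_{0}\otimes v_{1}$ produces the action $\phi\rightharpoonup v=\langle \phi,v_{1}\rangle v_{0}$, and conversely every left $H^{\ast}$-module is automatically rational in finite dimension and hence arises from a unique coaction. That $\rightharpoonup$ is indeed an $H^{\ast}$-action follows at once from coassociativity of $\rho$ together with the convolution product on $H^{\ast}$; note this product is unchanged upon passing to $H^{\ast cop}$, so for the purpose of the underlying module structure the first tensor factor of $\mathcal{D}(H)$ may be read as the algebra $H^{\ast}$.

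First I would extract from the given multiplication a generators-and-relations description of $\mathcal{D}(H)$. Specializing the product formula shows that $h\mapsto \varepsilon\otimes h$ and $\phi\mapsto \phi\otimes 1$ are algebra embeddings of $H$ and $H^{\ast}$, that $\phi\otimes h=(\phi\otimes 1)(\varepsilon\otimes h)$, and that the only non-trivial interaction of the two copies is the single cross relation
$$(\varepsilon\otimes h)(\phi\otimes 1)=\langle \phi_{1},\mathcal{S}^{-1}(h_{3})\rangle\,\langle \phi_{3},h_{1}\rangle\,(\phi_{2}\otimes h_{2}).$$
Thus a left $\mathcal{D}(H)$-module is exactly a vector space equipped with a left $H$-action and a left $H^{\ast}$-action that together satisfy the image of this relation.

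Next I would define the functor $F\colon {}_{H}\mathcal{Y}\mathcal{D}^{H}\to {}_{\mathcal{D}(H)}M$ by keeping the space and the $H$-action and letting $(\phi\otimes h)\cdot v=\phi\rightharpoonup(h\cdot v)=\langle \phi,(h\cdot v)_{1}\rangle (h\cdot v)_{0}$. Unitality and additivity are immediate, and compatibility with the $H$- and $H^{\ast}$-subalgebras is built in, so the whole content is the associativity $\big((\phi\otimes h)(\phi'\otimes h')\big)\cdot v=(\phi\otimes h)\cdot\big((\phi'\otimes h')\cdot v\big)$. Expanding the left side by the product formula and the right side by $h_{2}\cdot(h'\cdot v)=(h_{2}h')\cdot v$, this reduces precisely to the straightening identity
$$h\cdot(\phi\rightharpoonup v)=\langle \phi_{1},\mathcal{S}^{-1}(h_{3})\rangle\,\langle \phi_{3},h_{1}\rangle\,\phi_{2}\rightharpoonup(h_{2}\cdot v),$$
which, after replacing $\rightharpoonup$ by $\rho$ and evaluating against an arbitrary $\phi$, is exactly the Yetter-Drinfel'd compatibility $\rho(h\cdot v)=h_{2}\cdot v_{0}\otimes h_{3}v_{1}\mathcal{S}^{-1}(h_{1})$; the two antipode identities $\sum \mathcal{S}^{-1}(h_{2})h_{1}=\varepsilon(h)1$ are what collapse the dualized expression back to the stated form. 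In the reverse direction $G$ restricts a $\mathcal{D}(H)$-module along the two embeddings, converts the resulting $H^{\ast}$-action into a coaction, and reads the same relation backwards to recover the Yetter-Drinfel'd condition. Since $F$ and $G$ fix underlying spaces and both send $H$-linear $H$-colinear maps to $\mathcal{D}(H)$-linear maps, they are visibly mutually inverse, yielding the identification.

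The hard part will be the single verification in the third step: showing that the dualized cross relation is genuinely the stated left-right Yetter-Drinfel'd compatibility. This is a bookkeeping computation in Sweedler notation whose delicate features are the essential use of the \emph{invertibility} of $\mathcal{S}$ (through the factors $\mathcal{S}^{-1}(h_{3})$ and $\mathcal{S}^{-1}(h_{1})$) and the careful matching of the convolution product of $H^{\ast}$ inside $\mathcal{D}(H)$ with the evaluations $\langle \phi_{1},\mathcal{S}^{-1}(h_{3})\rangle\langle \phi_{3},h_{1}\rangle$ against the twisting factor $h_{3}v_{1}\mathcal{S}^{-1}(h_{1})$. Everything else is formal once the generators-and-relations picture of $\mathcal{D}(H)$ and the finite-dimensional comodule/module dictionary are in hand.
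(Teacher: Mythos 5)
Your proposal is correct, and since the paper offers no proof of this lemma at all (it is quoted verbatim from Majid's paper \cite{Maji91}), your argument supplies exactly the standard identification that the citation points to: the finite-dimensional dictionary between right $H$-comodules and left $H^{*}$-modules, the presentation of $\mathcal{D}(H)$ by its two subalgebras and one straightening relation, and the check that this relation dualizes to the stated compatibility $\rho(h\cdot v)=h_{2}\cdot v_{0}\otimes h_{3}v_{1}\mathcal{S}^{-1}(h_{1})$ via the identities $\mathcal{S}^{-1}(h_{2})h_{1}=\varepsilon(h)1=h_{2}\mathcal{S}^{-1}(h_{1})$. Note also that the cross relation you isolate, $(\varepsilon\otimes h)(\phi\otimes 1)=\langle \phi_{1},\mathcal{S}^{-1}(h_{3})\rangle\langle \phi_{3},h_{1}\rangle\,\phi_{2}\otimes h_{2}$, is precisely the identity $hf=\langle f_{1},\mathcal{S}^{-1}(h_{3})\rangle\langle f_{3},h_{1}\rangle f_{2}\bowtie h_{2}$ with which the paper opens its proof of Proposition \ref{theorem3.1}, so your conventions are consistent with the paper's and the verification goes through as you describe.
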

\begin{remark}\label{remark2.4}
\begin{enumerate}
\item Assume $(V, \cdot, \rho)$ is a left-right Yetter-Drinfel'd module over $H$, where $\rho_{}(v)=v_{0}\otimes v_{1}$ for $v \in V, h \in H$. Let $\rho_{l}(v)=S(v_{1})\otimes v_{0}$, then $\left(V, \cdot, \rho_{l}\right)$ is a left-left Yetter-Drinfel'd module over $H$.
\item The category ${ }_{H}^{H} \mathcal{Y} \mathcal{D}$ is a braided monoidal category with braidings $c_{V,W}(v\otimes w)=v_{-1}\cdot w\otimes v_{0}$ for $V,W\in { }_{H}^{H} \mathcal{Y} \mathcal{D}$, $v\in V,w\in W$.
\end{enumerate}
\end{remark}

\begin{definition}
 Let $V\in {}_{H}^{H} \mathcal{Y} \mathcal{D}$, a braided $\Bbb N$-graded Hopf algebra  $R=\oplus_{n\geq 0}R(n)$ in ${}_{H}^{H} \mathcal{Y} \mathcal{D}$ is called Nichols algebra of $V$ if
 \begin{enumerate}
 \item $\Bbbk\simeq R(0), V\simeq R(1)\in {}_{H}^{H} \mathcal{Y} \mathcal{D}.$
 \item $R(1)=P(R)=\{r\in R\mid \Delta_{R}(r)=r\otimes 1+1\otimes r\}.$
 \item $R$ is generated as algebra by $R(1)$.
 \end{enumerate}
 Then, $R$ is denoted by $\mathcal{B}(V)=\oplus_{n\geq 0}\mathcal{B}^{n}(V).$
\end{definition}

\begin{remark} \cite{AS,Sch1996,Shi} The Nichols algebra $\mathcal{B}(V)$ is completely determined by the braided vector space. There are the following maps:
 \begin{center}
 $\Omega_{n, 1}:=\mathrm{id}+c_{n}+c_{n} c_{n-1}+\cdots+c_{n} c_{n-1} \cdots c_{1}=\mathrm{id}+c_{n} \Omega_{n-1,1},$\\
 $\Omega_{1}:=\mathrm{id}, \quad \Omega_{2}:=\mathrm{id}+c, \quad\Omega_{n}:=\left(\Omega_{n-1} \otimes \mathrm{id}\right)\Omega_{n-1,1},$\\
\end{center}
where $\Omega_{n, 1} \in{\rm End}_{\Bbbk}\left(V^{\otimes(n+1)}\right),~\Omega_{n} \in {\rm End}_{\Bbbk}\left(V^{\otimes n}\right)$,
As a vector space, the Nichols algebra $\mathcal{B}(V)$ is equal to
$$\mathcal{B}(V)=\Bbbk \oplus V \oplus \bigoplus_{n=2}^{\infty} V^{\otimes n} / \operatorname{ker} \Omega_{n}.$$
\end{remark}
\subsection{Extensions}
We describe Hopf algebra $H_{c: \sigma_{0}}$ as an abelian extension and we refer to~\cite{AA,Kashina,Masuoka,TM} for related facts.
\begin{definition} Let $(H):K \stackrel{\iota}{\rightarrow} H \stackrel{\pi}{\rightarrow} A$ be a sequence of Hopf algebras and let $K^{+}$ be the kernel of the counit of $K$.
\begin{enumerate}
    \item Suppose $\iota$ is injective, $\pi$ is surjective. So that we may regard $\iota$ as an inclusion, $\pi$ as quotient. The sequence $(H)$ is called an extension of $H$ by $K$ if it satisfies $HK^{+}={\rm Ker}\pi.$
    \item An extension $(H)$ is called an abelian extension if $K$ is commutative and $A$ is cocommutative.
\end{enumerate}
\end{definition}
\begin{remark}\emph{Let $G$ be a finite group and $\Bbbk G$ be the group algebra with the usual Hopf algebra structure. Its dual Hopf algebra $(\Bbbk G)^{*}$ can be  identified with $\Bbbk^{G}$, the algebra of functions from $G$ to $\Bbbk$.
If a Hopf algebra $H$ can fit into an extension $\Bbbk^{G} \rightarrow H \rightarrow \Bbbk F,$ where $G,F$ are finite groups, then $H$ can be described as a bicrossed product $\Bbbk^{G}\#_{\sigma, \tau} \Bbbk F$: there exist maps}
$$G \stackrel{\triangleleft}{\leftarrow} G \times F \stackrel{\triangleright}{\rightarrow} F, \quad G \times F \times F \stackrel{\sigma}{\rightarrow} \Bbbk^{\times}, \quad G \times G \times F \stackrel{\tau}{\rightarrow} \Bbbk^{\times},$$
\emph{such that $(F, G, \triangleleft, \triangleright)$ is a matched pair of groups, $(\sigma, \tau)$ is a pair of compatible normal cocycles. That is, $(F, G, \triangleleft, \triangleright, \sigma, \tau)$ satisfies the following conditions},
\begin{equation*}
\begin{array}{l}
(t \triangleright g g^{\prime})=(t \triangleright g)((t \triangleleft g) \triangleright g^{\prime}),~t t^{\prime} \triangleleft g=(t \triangleleft(t^{\prime} \triangleright g))(t^{\prime} \triangleleft g),\\
\sigma(g \triangleleft t ; t^{\prime}, t^{\prime \prime}) \sigma(g ; t, t^{\prime} t^{\prime \prime})=\sigma(g ; t, t^{\prime})\sigma(g ; t t^{\prime}, t^{\prime \prime}),~~~
 \sigma\left(1 ; t, t^{\prime}\right) =\sigma\left(g ; 1, t^{\prime}\right)=\sigma(g ; t, 1)=1,\\
 \tau\left(g g^{\prime}, g^{\prime \prime} ; t\right) \tau\left(g, g^{\prime} ; g^{\prime \prime} \triangleright t\right) =\tau\left(g^{\prime}, g^{\prime \prime} ; t\right) \tau\left(g, g^{\prime} g^{\prime \prime} ; t\right),
 ~\tau\left(1, g^{\prime} ; t\right)=\tau(g, 1 ; t)=\tau\left(g, g^{\prime} ; 1\right)=1, \\
 \sigma\left(g g^{\prime} ; t, t^{\prime}\right) \tau\left(g, g^{\prime} ; t t^{\prime}\right)= \sigma\left(g ; g^{\prime} \triangleright t,\left(g^{\prime} \triangleleft t\right) \triangleright t^{\prime}\right) \sigma\left(g^{\prime} ; t, t^{\prime}\right)\tau\left(g, g^{\prime} ; t\right) \tau\left(g \triangleleft\left(g^{\prime} \triangleright t\right), g^{\prime} \triangleleft t ; t^{\prime}\right).
\end{array}
\end{equation*}
\emph{for all $g, g^{\prime}, g^{\prime \prime} \in G,~t, t^{\prime}, t^{\prime \prime} \in F.$ The product, coproduct and antipode of $H\simeq \Bbbk^{G} \#_{\sigma, \tau} \Bbbk F$ in the basis
$\left\{e_{g} \# t: g \in G, t \in F\right\}$ has the form:}
\begin{equation}\label{2-2}
\begin{array}{l}
\left(e_{g} \# t\right) \cdot\left(e_{g^{\prime}} \# t^{\prime}\right)=\delta_{g \triangleleft t^{}, g^{\prime}} \sigma\left(g ; t, t^{\prime}\right) e_{g} \# t t^{\prime}, \\
\Delta\left(e_{g} \# t\right)=\sum\limits_{g^{\prime} g^{\prime \prime}=g} \tau\left(g^{\prime}, g^{\prime \prime} ; t\right) e_{g^{\prime}} \#\left(g^{\prime \prime} \triangleright t\right) \otimes e_{g^{\prime \prime}} \# t,\\
\mathcal{S}\left(e_{g} \# t\right)=\sigma\left(g^{-1} ; g \triangleright t,(g \triangleright t)^{-1}\right)^{-1} \tau\left(g^{-1}, g ; t\right)^{-1} e_{(g \triangleleft t)^{-1}} \#(g \triangleright t)^{-1}.
\end{array}
\end{equation}
\end{remark}
Let $G=\{x,y\mid x^{4}=1=y^{2},xy=yx\}$, $\{e_{g}\}_{g\in G}$ the dual basis of $G$ in $\Bbbk^{G}$ and $t$ the generator of $\Bbb Z_{2}$.
In this paper, we study the following type abelian extension of $H=H_{c: \sigma_{0}}$,
$$\Bbbk^{G}\stackrel{\iota}{\rightarrow} H \stackrel{\pi}{\rightarrow} \Bbbk \Bbb Z_{2},$$
 equivalently, the Hopf algebra $H$ is determined by $(\Bbb Z_{2}, G, \triangleleft, \triangleright, \sigma, \tau)$ where
 \begin{align}\label{2-2}
  & \triangleright~\text{is a  trivial action},\quad\quad x \triangleleft t=x y,\quad\quad y \triangleleft t=y,\\
  &\sigma(x^{i} y^{j}, t, t)=(-1)^{\frac{i(i-1)}{2}},\quad\tau(x^{i} y^{j}, x^{k} y^{l}, t)=(-1)^{j k}.
\end{align}
We denote by $e_{\gamma}t$ the $e_{\gamma}\# t$ for $\gamma\in G$. As an algebra, $H$ is generated by $\{ e_{g},t \}_{g \in G}$ and Hopf algebra structure of $H$ is given by
\begin{equation}\label{3.26}
\begin{array}{l}
 e_{g}e_{h}=\delta_{g,h}e_{g},\ te_{g}=e_{g\triangleleft t} t,\ t^2=\sum\limits_{g \in G}\sigma(g,t,t)e_{g}, \\
 \Delta (e_{g})=\sum\limits_{ h,k \in G,\ hk=g} e_{h}\otimes e_{k},\ \Delta(t)=[\sum\limits_{g,h \in G}\tau(g,h,t)e_{g}\otimes e_{h}](t\otimes t),\\
   \mathcal{S}(t)=\sum\limits_{g\in G}\sigma(g^{-1},t,t)^{-1}\tau(g,g^{-1},t)^{-1}e_{(g\triangleleft t)^{-1}}t,\ \mathcal{S}(e_g)=e_{g^{-1}},\\
   \epsilon(t)=1,\ \epsilon(e_{g})=\delta_{g,1}1, ~~~~\forall g,h\in G.\\
\end{array}
\end{equation}

\begin{remark}
\emph{The semisimple Hopf algebras in \cite{Kashina} with group-like elements $G(H)=\Bbb Z_{4}\times \Bbb Z_{2}$ are the following 7 non-trivial ones described by the data $(F,G,\triangleleft,\sigma,\tau)$, and among of them, $H_{b:1}$ were studied by Zheng-Gao-Hu in \cite{Zheng21}.}
\begin{table}[h!]
    \centering
    \begin{tabular}{cc}
         \hline
$H_{a:1}$   &   ~~~~~ $x \triangleleft  t=x, y\triangleleft t=x^{2}y ;\sigma\left(x^{i} y^{j}, t \cdot t\right)=1, \tau\left(x^{i} y^{j}, x^{k} y^{l},t\right)=(-1)^{j k}.$\\
\hline
$H_{a:y}$   &   ~~~~~ $x \triangleleft  t=x, y\triangleleft t=x^{2}y ;\sigma\left(x^{i} y^{j}, t \cdot t\right)=(-1)^{j}, \tau\left(x^{i} y^{j}, x^{k} y^{l},t\right)=(-1)^{j k}.$\\
\hline
$H_{b:y}$    &   ~~~~~ $x \triangleleft  t=x^{3}, y\triangleleft t=y ;\sigma\left(x^{i} y^{j}, t \cdot t\right)=(-1)^{j}, \tau\left(x^{i} y^{j}, x^{k} y^{l},t\right)=(-1)^{j k}.$\\
\hline
 $H_{b:x^{2}y}$  &   ~~~~~ $x \triangleleft  t=x^{3}, y\triangleleft t=y ;\sigma\left(x^{i} y^{j}, t \cdot t\right)=(-1)^{i+j}, \tau\left(x^{i} y^{j}, x^{k} y^{l},t\right)=(-1)^{j k}.$\\
\hline
$H_{b:1}$        &   ~~~~~ $x \triangleleft  t=x^{3}, y\triangleleft t=y ;\sigma\left(x^{i} y^{j}, t \cdot t\right)=1, \tau\left(x^{i} y^{j}, x^{k} y^{l},t\right)=(-1)^{j k}.$\\
\hline
$H_{c: \sigma_{0}}$ &  ~~~~~$x \triangleleft t=x y, y \triangleleft t=y$;
                      $\sigma(x^{i} y^{j}, t, t)=(-1)^{\frac{i(i-1)}{2}}$, $\tau(x^{i} y^{j}, x^{k} y^{l}, t)=(-1)^{j k}.$ \\
	            \hline
$H_{c: \sigma_{1}}$ &   ~~~~~ $x \triangleleft  t=x y, y\triangleleft t=y ;\sigma\left(x^{k} y^{l}, t \cdot t\right)=(-1)^{\frac{k(k-1)}{2}} {\rm i}^{k},  {\rm i}^{2}=-1,
\tau\left(x^{i} y^{j}, x^{k} y^{l},t\right)=(-1)^{j k}.$\\
         \hline
    \end{tabular}
\end{table}
\end{remark}
\section{The Drinfel'd double of $H_{c: \sigma_{0}}$}
In the following of this paper, we let $H$ be the Hopf algebra $H_{c: \sigma_{0}}$ and $G=\{x,y\mid x^{4}=1=y^{2},xy=yx\}$. In this section, we want to describe the Drinfel'd double $\mathcal{D}(H)$. For this purpose, we define the following elements in $H^{*}$:
 \begin{equation}\label{x}
 \begin{array}{c}
 \zeta_{g}: \left\{\begin{array}{ll}
 e_{h}\mapsto \delta_{g,h}\\
 e_{h} t\mapsto 0,  \\
\end{array}
\right.\end{array}\quad
\begin{array}{c}
 \chi_{h}: \left\{\begin{array}{ll}
 e_{g}\mapsto  0\\
 e_{g} t\mapsto  \delta_{g,h}  \\
\end{array}
\right.\end{array}~~~( g, h\in G).
\end{equation}

Deducing from the coalgebra structure of $H$, it is not hard to see that the algebraic structure of $H^{*}$ is given by 
\begin{equation}\label{a}
\begin{array}{l}
\zeta_{x^{i}y^{j}} \zeta_{x^{k}y^{l}}= \zeta_{x^{i+k}y^{j+l}},\quad\chi_{x^{i}y^{j}}\chi_{x^{k}y^{l}}=(-1)^{jk}\chi_{x^{i+j}y^{j+l}},\\
\zeta_{x^{i}y^{j}}\chi_{x^{k}y^{l}}=0=\chi_{x^{k}y^{l}}\zeta_{x^{i}y^{j}},\quad 1_{H^{*}}=\varepsilon=\zeta_{1}+\chi_{1}.\\
\end{array}
\end{equation}

 Since $e_{x^{i}y^{j}},~t~(0\leq i\leq 3,~0\leq j\leq 1)$ are the generators of $H$ and $\zeta_{x^{i}y^{j}},~\chi_{x^{i}y^{j}}~(0\leq i\leq 3,~0\leq j\leq 1)$ are the generators of $H^{\ast}$, the Drinfel'd double $\mathcal{D}(H)=H^{\ast cop}\bowtie H$ is generated by
$\zeta_{x^{i}y^{j}} \bowtie 1_{H}^{}$,~$\chi_{x^{i}y^{j}} \bowtie 1_{H}^{}$,~$1_{H^{*cop}}\bowtie e_{x^{i}y^{j}}$,~$1_{H^{*cop}}\bowtie t$, and we abbreviate them by $\zeta_{x^{i}y^{j}},~\chi_{x^{i}y^{i}},~e_{x^{i}y^{j}},~t$.
\begin{proposition}\label{theorem3.1}
As an algebra, $\mathcal{D}(H)$ is generated by $e_{x^{i}y^{j}},~t,~\zeta_{x^{i}y^{j}},~\chi_{x^{i}y^{j}},~(0\leq i\leq 3,~0\leq j\leq 1)$ with relations:
\begin{align}\label{3-3}
&\zeta_{x^{i}y^{j}} \zeta_{x^{k}y^{l}}= \zeta_{x^{i+k}y^{j+l}},\quad \chi_{x^{i}y^{j}}\chi_{x^{k}y^{l}}=(-1)^{jk}\chi_{x^{i+k}y^{j+l}},\quad
\zeta_{x^{i}y^{j}}\chi_{x^{k}y^{l}}=0=\chi_{x^{k}y^{l}}\zeta_{x^{i}y^{j}},
\end{align}
\begin{align}\label{3-4}
&1=\sum\limits_{g\in G} e_{g}, \quad e_{g} e_{g^{\prime}}=\delta_{g, g^{\prime}}e_{g},\quad te_{g}=e_{g\triangleleft t} t,\quad t^{2}=\sum\limits_{g}\sigma(g;t,t)e_{g},\quad(g, g^{\prime}\in G),
\end{align}
\begin{align}\label{3-5}
e_{g}\chi_{h}=\chi_{h}e_{gh(h^{-1}\triangleleft t)},\quad \quad e_{g}\zeta_{h}=\zeta_{h}e_{g},\quad  ( g, h\in G),
\end{align}
\begin{equation}\label{3-6}
\begin{array}{l}
t\zeta_{h}=\zeta_{h\triangleleft t}(\sum\limits_{g\in G}\tau\left(h\triangleleft t, g ; t\right)\tau(~(h\triangleleft t)g, h^{-1}\triangleleft t; t)\tau(h\triangleleft t,h^{-1}\triangleleft t;~t)e_{g} t),\quad (  h\in G),\\
t\chi_{h}=\chi_{h\triangleleft t}(\sum\limits_{g\in G}\tau\left(h, g ; t\right)\tau(hg, h^{-1}\triangleleft t; t)\tau(h\triangleleft t,h^{-1}\triangleleft t;~t)e_{g} t),\quad( h\in G).
\end{array}
\end{equation}
\end{proposition}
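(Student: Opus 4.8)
\emph{Generators and internal relations.} The plan is to read the presentation off directly from the bicrossed product description $\mathcal{D}(H)=H^{*cop}\bowtie H$, using the multiplication rule $(\phi\otimes h)(\phi'\otimes h')=\phi\langle\phi'_3,h_1\rangle\phi'_2\otimes h_2\langle\phi'_1,\mathcal{S}^{-1}(h_3)\rangle h'$ recorded just before the statement. First I observe that $\phi\mapsto\phi\bowtie 1_H$ and $h\mapsto 1_{H^*}\bowtie h$ are algebra embeddings: setting $h'=1$ (respectively $\phi=\varepsilon,\ h=1$) collapses the rule to the ordinary product of $H^{*cop}$ (respectively of $H$). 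Hence $H^{*cop}\bowtie 1$ and $1\bowtie H$ are subalgebras, and since $\zeta_{x^iy^j},\chi_{x^iy^j}$ generate $H^*$ by \eqref{a} while $e_{x^iy^j},t$ generate $H$ by \eqref{3.26}, these four families generate $\mathcal{D}(H)$. Relations \eqref{3-3} are then just the product of $H^{*cop}$, which coincides with that of $H^*$ since passing to the opposite coalgebra does not change multiplication, and \eqref{3-4} is the product of $H$ from \eqref{3.26}.

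\emph{Reduction to cross relations.} Since $\mathcal{D}(H)=H^*\otimes H$ as a vector space, every element has a unique normal form $\phi\bowtie h$ with the dual factor on the left, so the algebra is determined once we know how to straighten a product $(1\bowtie h)(\phi\bowtie 1)$ back into this form; this is exactly the content of \eqref{3-5} and \eqref{3-6}. Specializing the rule to $\phi=\varepsilon$, $h'=1$ gives $h\,\phi'=\langle\phi'_3,h_1\rangle\langle\phi'_1,\mathcal{S}^{-1}(h_3)\rangle\,\phi'_2\bowtie h_2$. Two ingredients feed this: the iterated comultiplication of the dual on $\zeta_h,\chi_h$, obtained by dualizing the multiplication \eqref{3.26} of $H$; and the antipode of $H$ on $e_g,t$, for which I use $\mathcal{S}^{-1}=\mathcal{S}$ since $H$ is semisimple. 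Dualizing \eqref{3.26} yields $\Delta(\zeta_g)=\sum_{ab=g}\zeta_a\otimes\zeta_b+\sigma(g;t,t)\,\chi_g\otimes\chi_{g\triangleleft t}$ and $\Delta(\chi_g)=\zeta_g\otimes\chi_g+\chi_g\otimes\zeta_{g\triangleleft t}$, where the $\triangleleft t$-twists and the weight $\sigma(g;t,t)$ come from $te_g=e_{g\triangleleft t}t$ and $t^2=\sum_g\sigma(g;t,t)e_g$.

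\emph{The four cross relations.} For $h=e_g$ the computation is short, because $\Delta(e_g)=\sum_{ab=g}e_a\otimes e_b$ and $\mathcal{S}(e_g)=e_{g^{-1}}$ consist of group idempotents, on which $\chi$ vanishes; hence in pairing the outer legs $\phi'_1,\phi'_3$ against the coproduct of $e_g$ only the $\zeta\otimes\cdots\otimes\zeta$ contributions survive. For $\zeta_h$ this forces the plain commutation $e_g\zeta_h=\zeta_h e_g$, while for $\chi_h$ the $\triangleleft t$-twist in $\Delta(\chi_h)$ produces the shift $e_g\chi_h=\chi_h e_{gh(h^{-1}\triangleleft t)}$ (here $\triangleleft t$ is the group automorphism $x^iy^j\mapsto x^iy^{i+j}$, so $(h\triangleleft t)^{-1}=h^{-1}\triangleleft t$). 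The substantial cases are $t\zeta_h$ and $t\chi_h$: now $h=t$ forces the second iterated coproduct $(\Delta\otimes\id)\Delta(t)$, and expanding $\Delta(t)=[\sum_{g,g'}\tau(g,g';t)e_g\otimes e_{g'}](t\otimes t)$ twice and regrouping the idempotents produces three $\tau$-weights per term. Pairing the legs $t_1$ and $\mathcal{S}^{-1}(t_3)$ against the dualized coproducts of $\zeta_h$ and $\chi_h$ and collecting the surviving $e_g t$ summand then yields the stated products, for example $t\zeta_h=\zeta_{h\triangleleft t}\big(\sum_g\tau(h\triangleleft t,g;t)\tau((h\triangleleft t)g,h^{-1}\triangleleft t;t)\tau(h\triangleleft t,h^{-1}\triangleleft t;t)\,e_g t\big)$, together with its $\chi$-analogue.

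\emph{Completeness and the main difficulty.} Finally I argue completeness: relations \eqref{3-3}--\eqref{3-6} allow any word in the generators to be rewritten as a single dual monomial $\zeta_g$ or $\chi_g$ followed by $e_{g'}$ or $e_{g'}t$, and the number of such normal forms is $\dim H^*\cdot\dim H=16\cdot 16=256=\dim\mathcal{D}(H)$, so these relations already cut the free algebra down to the correct dimension and no further relation can hold. The main obstacle is the bookkeeping in the last two relations: correctly iterating $\Delta(t)$, combining the three $\tau$-values via the $2$-cocycle identity, and checking that the $\triangleleft t$-twists introduced separately by $\mathcal{S}^{-1}(t_3)$ and by the $\chi\otimes\chi$ part of the dual coproduct land exactly on the indices displayed in \eqref{3-6}; the relations involving $e_g$ are, by contrast, essentially formal.
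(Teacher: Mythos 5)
Your strategy differs from the paper's: you derive the cross relations by writing down the comultiplication of $H^{*}$ explicitly and feeding it into the straightening rule, whereas the paper never computes any dual coproduct --- it pairs the straightened product against an arbitrary $k\in H$ via $\langle\langle f_{1},\mathcal{S}^{-1}(h_{3})\rangle\langle f_{3},h_{1}\rangle f_{2},k\rangle=\langle f,\mathcal{S}^{-1}(h_{3})\,k\,h_{1}\rangle$ and so works entirely inside $H$. Your added completeness argument (normal forms $\zeta_{g}e_{g'}$, $\zeta_{g}e_{g'}t$, $\chi_{g}e_{g'}$, $\chi_{g}e_{g'}t$, together with the count $16\cdot 16=256=\operatorname{dim}\mathcal{D}(H)$) is a genuine plus that the paper leaves implicit. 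However, the route you chose contains an error at its computational core.

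Your dual coproduct of $\zeta_{g}$ is wrong. The coproduct of $H^{*}$ is dual to the \emph{product} of $H$, and the $e_{h}$ are orthogonal idempotents, $e_{a}e_{b}=\delta_{a,b}e_{a}$; hence the $\zeta\otimes\zeta$-part of $\Delta_{H^{*}}(\zeta_{g})$ is $\zeta_{g}\otimes\zeta_{g}$, not $\sum_{ab=g}\zeta_{a}\otimes\zeta_{b}$. The latter is what dualizes the \emph{coproduct} $\Delta(e_{g})=\sum_{ab=g}e_{a}\otimes e_{b}$ of $H$, i.e.\ it encodes the multiplication $\zeta_{a}\zeta_{b}=\zeta_{ab}$ of (\ref{a}), which you have already used for relation (\ref{3-3}); you have conflated the two dualities. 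The correct formula is $\Delta_{H^{*}}(\zeta_{g})=\zeta_{g}\otimes\zeta_{g}+\sigma(g;t,t)\,\chi_{g}\otimes\chi_{g\triangleleft t}$ (your formula for $\Delta(\chi_{g})$ is correct). This is not a typo-level slip: with your formula $\Delta(\zeta_{1}+\chi_{1})\neq(\zeta_{1}+\chi_{1})\otimes(\zeta_{1}+\chi_{1})$, so the map is not even compatible with the unit $\varepsilon=\zeta_{1}+\chi_{1}$, and pushing it through the straightening rule yields $e_{g}\zeta_{h}=\sum_{a,c\in G}\zeta_{ha^{-1}c^{-1}}\bowtie e_{ga^{-1}c}$, a large sum, instead of the single term $\zeta_{h}e_{g}$ required by (\ref{3-5}); the same corruption enters the derivation of $t\zeta_{h}$ in (\ref{3-6}). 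Since all four cross relations are routed through these coproducts (and for (\ref{3-6}) you only describe, rather than carry out, the bookkeeping), the proof as written does not establish (\ref{3-5}) and (\ref{3-6}). The repair is local --- replace the $\zeta$-coproduct by the correct one and redo the pairings, or adopt the paper's device of reducing everything to products $\mathcal{S}^{-1}(h_{3})\,k\,h_{1}$ inside $H$ --- but as submitted the argument fails at this step.
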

\begin{proof}
We only verify two equations of (\ref{3-5}) and (\ref{3-6}). Due to the definition, it follows that,
$$hf=(1_{H^{*cop}}\bowtie h)(f\bowtie 1_{H})=h_{1}\rightharpoonup f \leftharpoonup \mathcal{S}^{-1}(h_{3})\bowtie h_{2}=\langle f_{1}, \mathcal{S}^{-1}(h_{3})\rangle \langle {f_{3},h_{1}} \rangle f_{2}\bowtie h_{2},$$
where $h\in H,f\in H^{*cop}$. Let $k\in H$, it is straightforward to show that
$$\langle\langle f_{1}, \mathcal{S}^{-1}(h_{3})\rangle \langle f_{3},h_{1}\rangle f_{2}, k\rangle=\langle f,  \mathcal{S}^{-1}(h_{3})\cdot k\cdot h_{1}\rangle.$$
Our problem reduces to determine $\mathcal{S}^{-1}(h_{3})\cdot k\cdot h_{1}$ in each case.
Assume $\gamma,g,h\in G$ and $k=e_{\gamma}$ or ${e_{\gamma}}t$, after some tedious manipulation by (\ref{2-2}), it follows that,
\begin{align}\label{3-8}
&\mathcal{S}^{-1}(~e_{{g}_{3}}~)\cdot~e_{\gamma}\cdot~e_{{g}_{1}}=\delta_{{g_{3}}^{-1},\gamma}~e_{\gamma}\cdot~e_{{g}_{1}}=\delta_{{g_{3}}^{-1},\gamma}\delta_{\gamma,g_{1}}~e_{\gamma},
\end{align}
\begin{align*}
&\mathcal{S}^{-1}(~e_{g_{3}}~ t)=\sigma(~{g_{3}}^{-1};~t,~t)\tau(~{g_{3}}^{-1},~{g_{3}}; t)e_{(g_{3}\triangleleft t)^{-1}}~ t,
\end{align*}
\begin{align}\label{3-9}
&\mathcal{S}^{-1}(~e_{g_{3}}  t~)\cdot~(e_{\gamma}~ t)~\cdot~(e_{g_{1}} ~ t)=\tau(~{g_{3}}^{-1},~g_{3}; t)\delta_{({(g_{3}\triangleleft t)^{-1}})\triangleleft t,\gamma}\delta_{(g_{3}\triangleleft t)^{-1},g_{1}}(e_{\gamma\triangleleft t}t),
\end{align}
\begin{align*}
&\Delta(t)=\Delta(1\cdot~t)=\Delta\left(\Sigma_{g}e_{g} ~ t\right)=\sum\limits_{g_1,g_2\in G} \tau\left(g_{1}, g_{2} ; t\right) (e_{g_{1}} ~ t) \otimes (e_{g_{2}} ~ t),
\end{align*}
\begin{align}\label{3-10}
\Delta_{2}(t)=(\Delta \otimes id)(\Delta(t))=\sum\limits_{g_{1},g_{2},g_{3}\in G}\tau\left(g_{1}, g_{2} ; t\right)
\tau(g_{1}g_{2}, g_{3}; t)(e_{g_{1}}~ t) \otimes (e_{g_{2}} ~ t)\otimes (e_{g_{3}} ~ t).
\end{align}
Let $k=e_{\gamma}$, by~(\ref{3-8}), it follows that
\begin{align*}
&\langle\zeta_h,\mathcal{S}^{-1}(~e_{{g}_{3}}~)\cdot~e_{\gamma}\cdot~e_{{g}_{1}}\rangle=\delta_{{g_{3}}^{-1},\gamma}\delta_{\gamma,g_{1}}\delta_{\gamma,h},\\
&\langle(\zeta_{h})_{1}, \mathcal{S}^{-1}(e_{g_{3}})\rangle\langle(\zeta_{h})_{3},(e_{g_{1}})\rangle(\zeta_{h})_{2}=\zeta_{h},\\
&e_{g}\zeta_{h}=\zeta_{h}e_{g},~~(g_{3}^{-1}=g_{1}=\gamma=h,~ e_{g_{2}}=e_{g}).
\end{align*}
It is clear that relation holds for $k=e_{\gamma}~ t$.

Let $k=e_{\gamma}~ t$, due to~(\ref{3-9}) and (\ref{3-10}), it follows that
\begin{align*}
&\langle\chi_h,\tau(g_{1}, g_{2} ; t)\tau(g_{1}g_{2}, g_{3}; t)\mathcal{S}^{-1}(~e_{g_{3}}~ t~)\cdot~(e_{\gamma}~ t)~\cdot~(e_{g_{1}} ~ t)\rangle\\
&=\tau(g_{1},g_{2};t)\tau(g_{1}g_{2},g_{3};t)\tau({g_{3}}^{-1},{g_{3}};t)\delta_{({(g_{3}\triangleleft t)^{-1}})\triangleleft t,\gamma}\delta_{(g_{3}\triangleleft t)^{-1},g_{1}}\delta_{h\triangleleft t, \gamma},\\
&\langle (\chi_{h})_{1}, \mathcal{S}^{-1}({e_{g}}_{3}~t)\rangle\langle(\chi_{h})_{3},{e_{g}}_{1}~t\rangle(\chi_{h})_{2}=\chi_{h\triangleleft t},\\
&t\chi_{h}=\chi_{h\triangleleft t}(\sum\limits_{g\in G}\tau\left(h, g ; t\right)\tau(hg, h^{-1}\triangleleft t; t)\tau(h\triangleleft t,h^{-1}\triangleleft t;~t)(e_{g} ~ t)),\\
&~~~({g_{3}}^{-1}\triangleleft t=\gamma\triangleleft t=h=g_{1},~g_{2}=g).
\end{align*}
Also, it is not hard to see that relation holds for $k=e_{\gamma}$.
\end{proof}

\section{The Category of ${ }_{H}^{H} \mathcal{Y} \mathcal{D}$}
In the previous section, we already described the algebraic structure of the Drinfel'd double $\mathcal{D}(H)$. In this section, we want to give all simple left $\mathcal{D}(H)$-modules, equivalently, give all  simple Yetter-Drinfel'd modules $V$ over $H$.
\subsection{The simple left $\mathcal{D}(H)$\mbox{-}module}

 Let $V$ be a left $\mathcal{D}(H)$\mbox{-}module, we define $V_{\mu}=\{v\in V\mid e_{h}v=\delta_{\mu,h}v,\;h\in G\}$ for $\mu \in G$, then $V$ decomposes into a direct sum of $V_{\mu}$'s as vector space. Furthermore, by the relations $\zeta_{g}\chi_{h}=0=\chi_{h}\zeta_{g}~(g, h \in G)$ and $1_{H^{*}}=\varepsilon=\zeta_{1}+\chi_{1}$, it follows that as a vector space $V_{\mu}$ decomposes into a direct sum of $\zeta_{1}V_{\mu}$ and $\chi_{1}V_{\mu}.$ Let $S=\{1,y,x^{2},x^{2}y\}$ and~$T=\{x,xy,x^{3},x^{3}y\}$ be the subsets of $G$. By~(\ref{2-2}), it follows that $T$ and $S$ are stable under the action $\triangleleft$ of $t$. Furthermore, the elements of $S$ are fixed points under the action above.
 We denote by $\xi$ a primitive $4$-th root of $1$. We construct the following 32 one-dimensional spaces together with the corresponding $\mathcal{D}(H)$-action:
\begin{enumerate}
\item[(1)] $V_{1,i}^{+}~(i=0,1,2,3):$ the action of $\mathcal{D}(H)$ is given by
\begin{align*}
e_{x^{k}y^{l}}\mapsto \delta_{x^{k}y^{l},1}, \quad t\mapsto 1, \quad \zeta_{x^{k}y^{l}}\mapsto \xi^{ik},\quad \chi_{x^{k}y^{l}}\mapsto 0,~(0\leq k\leq 3,~0\leq l\leq 1).
\end{align*}

\item[(2)] $V_{1,i}^{-}~(i=0,1,2,3):$ the action of $\mathcal{D}(H)$ is given by
\begin{align*}
e_{x^{k}y^{l}}\mapsto \delta_{x^{k}y^{l},1}, \quad t\mapsto -1, \quad \zeta_{x^{k}y^{l}}\mapsto \xi^{ik},\quad \chi_{x^{k}y^{l}}\mapsto 0,~(0\leq k\leq 3,~0\leq l\leq 1).
\end{align*}

\item[(3)] $V_{y,i}^{+}~(i=0,1,2,3):$ the action of $\mathcal{D}(H)$ is given by
\begin{align*}
e_{x^{k}y^{l}}\mapsto \delta_{x^{k}y^{l},y}, \quad t\mapsto 1, \quad \zeta_{x^{k}y^{l}}\mapsto \xi^{ik}(-1)^{l},\quad \chi_{x^{k}y^{l}}\mapsto 0,~(0\leq k\leq 3,~0\leq l\leq 1).
\end{align*}

\item[(4)] $V_{y,i}^{-}~(i=0,1,2,3):$ the action of $\mathcal{D}(H)$ is given by
\begin{align*}
e_{x^{k}y^{l}}\mapsto \delta_{x^{k}y^{l},y}, \quad t\mapsto -1, \quad \zeta_{x^{k}y^{l}}\mapsto \xi^{ik}(-1)^{l},\quad \chi_{x^{k}y^{l}}\mapsto 0,~(0\leq k\leq 3,~0\leq l\leq 1).
\end{align*}

\item[(5)] $V_{x^{2},i}^{+}~(i=0,1,2,3):$ the action of $\mathcal{D}(H)$ is given by
\begin{align*}
e_{x^{k}y^{l}}\mapsto \delta_{x^{k}y^{l},x^{2}}, \quad t\mapsto \xi, \quad \zeta_{x^{k}y^{l}}\mapsto \xi^{ik},\quad \chi_{x^{k}y^{l}}\mapsto 0,~(0\leq k\leq 3,~0\leq l\leq 1).
\end{align*}

\item[(6)] $V_{x^{2},i}^{-}~(i=0,1,2,3):$ the action of $\mathcal{D}(H)$ is given by
\begin{align*}
e_{x^{k}y^{l}}\mapsto \delta_{x^{k}y^{l},x^{2}}, \quad t\mapsto -\xi, \quad \zeta_{x^{k}y^{l}}\mapsto \xi^{ik},\quad \chi_{x^{k}y^{l}}\mapsto 0,~(0\leq k\leq 3,~0\leq l\leq 1).
\end{align*}

\item[(7)] $V_{x^{2}y,i}^{+}~(i=0,1,2,3):$ the action of $\mathcal{D}(H)$ is given by
\begin{align*}
e_{x^{k}y^{l}}\mapsto \delta_{x^{k}y^{l},x^{2}y}, \quad t\mapsto \xi, \quad \zeta_{x^{k}y^{l}}\mapsto \xi^{ik}(-1)^{l},\quad \chi_{x^{k}y^{l}}\mapsto 0,~(0\leq k\leq 3,~0\leq l\leq 1).
\end{align*}

\item[(8)] $V_{x^{2}y,i}^{-}~(i=0,1,2,3):$ the action of $\mathcal{D}(H)$ is given by
\begin{align*}
e_{x^{k}y^{l}}\mapsto \delta_{x^{k}y^{l},x^{2}y}, \quad t\mapsto -\xi, \quad \zeta_{x^{k}y^{l}}\mapsto \xi^{ik}(-1)^{l},\quad \chi_{x^{k}y^{l}}\mapsto 0,~(0\leq k\leq 3,~0\leq l\leq 1).
\end{align*}
\end{enumerate}
\begin{enumerate}
\item[(S0)]~We denote them through a unified way $V_{\mu,i}^{\pm}$ for $\mu\in \{1,~y,~x^{2},~x^{2}y\},\;0\leq i\leq 3.$
\end{enumerate}

\begin{lemma}\label{lemma4.1}
 Above $32$ one-dimensional spaces together with the corresponding action are $\mathcal{D}(H)$\mbox{-}modules. Furthermore, all one-dimensional $\mathcal{D}(H)$\mbox{-}modules belong to$(S0)$.

\begin{proof}
It is straightforward to show that $V_{\mu,i}^{\pm}$ is a simple module through checking the relations given in Proposition \ref{theorem3.1}. Assume $V$ is a one-dimensional $\mathcal{D}(H)$\mbox{-}module. Take a basis $\{v\}$ of $V$. Since $1=\sum\limits_{k,l} e_{x^{k}y^{l}}$ is a decomposition of $1$ into orthogonal idempotents, it follows that there exists $\mu\in G$ such that $e_{x^{k}y^{l}}\cdot v=\delta_{x^{k}y^{l},\mu}v$, equivalently, $v=e_{\mu}v$ and $V=V_{\mu}$. Since $te_{g}v=e_{g\triangleleft t}tv~(g\in G)$, we have $t\cdot v\in V_{\mu\triangleleft t}$ and this leads to $\mu\triangleleft t=\mu$, equivalently, $\mu\in S$. Assume $V=\chi_{1}V_{u}$ and $\rm dim V=1$. But in this situation, $e_{g}\chi_{h}=\chi_{h}e_{gh(h^{-1}\triangleleft t)}$, it induces $\chi_{h}v_{\mu}\in V_{\mu (h \triangleleft t)h^{-1}}$. In particular, $\chi_{x}v_{\mu}\in V_{\mu y}$, this contradicts the fact $\rm dim V=1$. Thus, we have
    \begin{align}
v=\zeta_{1}v,\quad\quad V=\zeta_{1}V_{\mu}~(\mu\in S),\quad \quad \chi_{x^{k}y^{l}}\cdot v=0,\quad t\cdot v=\lambda v~(\lambda\in \Bbbk).
  \end{align}
According to (\ref{3-4}), it follows that $\lambda^{2}v=t^{2}v=\sum\limits_{g}\sigma(g,t,t)e_{g}v$. Thus,
 $$\lambda^{2}=\sigma(\mu,t,t),\quad  \lambda=\pm 1~(\mu=1,~y),\quad\lambda=\pm \xi~(\mu=x^{2},~x^{2}y).$$
Assume $\zeta_{x^{}}v=m v,~\zeta_{y}v=n v,$ where $m,n\in \Bbbk$.  By relation~(\ref{3-3}), it follows that
\begin{align}
(\zeta_{x})^{4}=\zeta_{x^{4}}=\zeta_{1},\quad\quad (\zeta_{y})^{2}=\zeta_{y^{2}}=\zeta_{1}.
\end{align}
Thus, we have
 \begin{equation} \label{equation4.80}
 v\stackrel{(4.1)}{=}\zeta_{1}v=\zeta_{x^{4}}v\stackrel{(4.2)}{=}(\zeta_{x})^{4}v=m^{4}v,\quad v\stackrel{(4.1)}{=}\zeta_{1}v=\zeta_{y^{2}}v\stackrel{(4.2)}{=}(\zeta_{y})^{2}v=n^{2}v.
 \end{equation}

 According to the relation~(\ref{3-6}), it follows that
\begin{equation*}
\begin{array}{l}
t\zeta_{x}v=\zeta_{x\triangleleft t}(\sum\limits_{g}\tau\left(x\triangleleft t, g ; t\right)\tau(~(x\triangleleft t)g, x^{-1}\triangleleft t; t)\tau(x\triangleleft t,x^{-1}\triangleleft t;~t)(e_{g} t))v,\\
t\zeta_{y}v=\zeta_{y\triangleleft t}(\sum\limits_{g}\tau\left(y\triangleleft t, g ; t\right)\tau(~(y\triangleleft t)g,y^{-1}\triangleleft t; t)\tau(y\triangleleft t,y^{-1}\triangleleft t;~t)(e_{g}  t))v.\\
\end{array}
\end{equation*}
Equivalently,
\begin{equation}\label{4-2-0}
\begin{array}{l}
n\tau\left(xy, \mu ; t\right)\tau((xy)\mu, x^{3}y; t)\tau(xy,x^{3}y;~t)=1
\end{array}
\end{equation}
Due to (\ref{equation4.80})~(\ref{4-2-0}), it follows that
\begin{equation}\label{4-2}
\begin{array}{l}
m=\xi^{i},~i\in\{0,1,2,3\},\\
n=1,~(\mu=1,~x^{2}),\\
n=-1,~(\mu=y,~x^{2}y).\\
\end{array}
\end{equation}
By the above arguments, it follows that there are exactly 32 non-isomorphic one-dimensional simple left $\mathcal{D}(H)$\mbox{-}modules, as shown above.
\end{proof}
\end{lemma}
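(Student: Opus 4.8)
The plan is to establish the two assertions separately. For the first, that each of the prescribed assignments $V_{\mu,i}^{\pm}$ defines a $\mathcal{D}(H)$-module, I would check that the scalars respect every defining relation of Proposition \ref{theorem3.1}. On a one-dimensional space each generator acts by a scalar, so the relations (\ref{3-3})--(\ref{3-6}) become scalar identities. Since $\chi_{x^{k}y^{l}}$ acts as $0$ throughout, every relation involving $\chi$ holds trivially, and the $\zeta$-relations reduce to the elementary identities $\xi^{ik}\xi^{ik'}=\xi^{i(k+k')}$ together with $(-1)^{l}(-1)^{l'}=(-1)^{l+l'}$. The only substantive checks are the relation $t^{2}=\sum_{g}\sigma(g;t,t)e_{g}$, which forces the square of the $t$-scalar to equal $\sigma(\mu;t,t)$, and the relation (\ref{3-6}) for $t\zeta_{x}$ and $t\zeta_{y}$, whose cocycle factors must be evaluated via (\ref{2-2}); these are precisely the constraints built into the definition of $V_{\mu,i}^{\pm}$. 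A one-dimensional module has no proper nonzero submodule, so each is automatically simple.

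For the converse I would run a weight-space analysis. Given a one-dimensional module $V=\Bbbk v$, the decomposition $1=\sum_{g\in G}e_{g}$ into orthogonal idempotents makes $v$ a simultaneous eigenvector, so $V=V_{\mu}$ for a unique $\mu\in G$. Applying $te_{g}=e_{g\triangleleft t}t$ to $v$ gives $t\cdot v\in V_{\mu\triangleleft t}$, and since $\dim V=1$ this forces $\mu\triangleleft t=\mu$, i.e. $\mu\in S$. Next I would rule out the $\chi$-component: writing $1_{H^{*}}=\zeta_{1}+\chi_{1}$ with $\zeta_{1}\chi_{1}=0$, suppose $v=\chi_{1}v$. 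Using $e_{g}\chi_{h}=\chi_{h}e_{gh(h^{-1}\triangleleft t)}$ one finds that $\chi_{x}v$ lies in the weight space $V_{\mu y}$, while $\chi_{x^{3}}\chi_{x}=\chi_{1}$ gives $\chi_{x^{3}}\chi_{x}v=v\neq 0$ and hence $\chi_{x}v\neq 0$; as $\mu y\neq\mu$ this contradicts $\dim V=1$. Therefore $v=\zeta_{1}v$ and $\chi_{h}v=0$ for all $h$.

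It then remains to pin down the surviving scalars. With $\chi$ acting by $0$ and $t\cdot v=\lambda v$, the relation $t^{2}=\sum_{g}\sigma(g;t,t)e_{g}$ yields $\lambda^{2}=\sigma(\mu;t,t)$, so $\lambda=\pm 1$ for $\mu\in\{1,y\}$ and $\lambda=\pm\xi$ for $\mu\in\{x^{2},x^{2}y\}$. Writing $\zeta_{x}v=mv$ and $\zeta_{y}v=nv$, the relations $\zeta_{x}^{4}=\zeta_{1}$ and $\zeta_{y}^{2}=\zeta_{1}$ force $m^{4}=1=n^{2}$, so $m=\xi^{i}$ with $i\in\{0,1,2,3\}$ and $n=\pm 1$. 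Finally, evaluating (\ref{3-6}) for $t\zeta_{y}$ and reducing the resulting $\tau$-factors via (\ref{2-2}) pins the sign of $n$ to $\mu$ (namely $n=1$ for $\mu\in\{1,x^{2}\}$ and $n=-1$ for $\mu\in\{y,x^{2}y\}$), while $m$ and the sign of $\lambda$ stay free. Counting $4$ choices of $\mu$, $2$ signs of $\lambda$, and $4$ values of $i$ gives exactly the $32$ modules of (S0), and distinct data give non-isomorphic modules since all scalars are recovered intrinsically from the action.

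I expect the main obstacle to be the bookkeeping in the last step: correctly evaluating the triple $\tau$-products in (\ref{3-6}) from the explicit cocycle in (\ref{2-2}) so as to see which of $m$, $n$, $\lambda$ are genuinely constrained. The exclusion of the $\chi_{1}$-component is conceptually the crux, but it is short once one observes that $\chi_{x}$ is nonzero on $v$ yet shifts the $G$-weight from $\mu$ to $\mu y$.
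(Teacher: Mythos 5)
Your overall route is the same as the paper's: decompose $1=\sum_{g\in G}e_{g}$ to get $V=V_{\mu}$, use $te_{g}=e_{g\triangleleft t}t$ to force $\mu\in S$, exclude the $\chi_{1}$-component via the weight shift $\chi_{x}v\in V_{\mu y}$, and then pin down the scalars $\lambda,m,n$ from the algebra relations. One point where you are actually sharper than the paper: the observation that $\chi_{x^{3}}\chi_{x}=\chi_{1}$, hence $\chi_{x}v\neq 0$ whenever $v=\chi_{1}v$, is exactly what is needed to turn the weight-shift into a genuine contradiction; the paper asserts the contradiction without verifying the nonvanishing.

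There is, however, one concrete error in your final step. You claim that evaluating relation (\ref{3-6}) for $t\zeta_{y}$ pins the sign of $n$. It cannot: for $h=y$ one has $y\triangleleft t=y$ and $y^{-1}\triangleleft t=y$, and since $\tau(x^{i}y^{j},x^{k}y^{l};t)=(-1)^{jk}$, all three cocycle factors $\tau(y,\mu;t)$, $\tau(y\mu,y;t)$, $\tau(y,y;t)$ equal $1$ (for $\mu\in S$ the $x$-exponent of the second argument is even, respectively zero), so that relation reads $n\lambda=n\lambda$ and is vacuous. The constraint on $n$ comes instead from the $t\zeta_{x}$ relation, precisely because $x\triangleleft t=xy$ turns $\zeta_{x\triangleleft t}$ into $\zeta_{x}\zeta_{y}$ -- so the right-hand side acquires the factor $mn$ while the left-hand side is only $m\lambda$ -- and because the cocycle product $\tau(xy,\mu;t)\,\tau(xy\mu,x^{3}y;t)\,\tau(xy,x^{3}y;t)$ equals $+1$ for $\mu\in\{1,x^{2}\}$ and $-1$ for $\mu\in\{y,x^{2}y\}$; this is the paper's equation (\ref{4-2-0}), which forces $n=1$ for $\mu\in\{1,x^{2}\}$ and $n=-1$ for $\mu\in\{y,x^{2}y\}$. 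If one follows your plan literally and checks only $t\zeta_{y}$, then $n=\pm 1$ remains free and the count comes out to $64$ one-dimensional modules rather than $32$, contradicting the statement being proved. The repair is a one-line substitution ($h=x$ in place of $h=y$); the rest of your argument stands.
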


Next, we turn to two-dimensional case. Let $i=0,1,2,3$, we construct the following 16 two-dimensional spaces $\k \langle v,~w\rangle$ together with the corresponding $\mathcal{D}(H)$-action:
\begin{enumerate}
\item[(1)] $W_{x,i,+}^{+}:~$ Let $0\leq k\leq 3,~0\leq l\leq 1$,~we define~$\chi_{x^{k}y^{l}}\mapsto 0,$
\begin{align*}
e_{x^{k}y^{l}}\mapsto \left(\begin{array}{ll}\delta_{x^{k}y^{l},x} & 0 \\
0 & \delta_{x^{k}y^{l},xy}\end{array}\right),~t\mapsto \left(\begin{array}{ll}0 & 1 \\
1 & 0\end{array}\right),~\zeta_{x}\mapsto \left(\begin{array}{ll}\xi^{i} & 0 \\
0 & \xi^{i}\end{array}\right),~\zeta_{y}\mapsto \left(\begin{array}{ll}-1 & 0 \\
0 & 1\end{array}\right).
\end{align*}
\item[(2)] $W_{x,i,-}^{+}:$ Let $0\leq k\leq 3,~0\leq l\leq 1$,~we define~$\chi_{x^{k}y^{l}}\mapsto 0,$
\begin{align*}
e_{x^{k}y^{l}}\mapsto \left(\begin{array}{ll}\delta_{x^{k}y^{l},x} & 0 \\
0 & \delta_{x^{k}y^{l},xy}\end{array}\right),~t\mapsto \left(\begin{array}{ll}0 & 1 \\
1 & 0\end{array}\right),~\zeta_{x}\mapsto \left(\begin{array}{ll}-\xi^{i} & 0 \\
0 & \xi^{i}\end{array}\right),~\zeta_{y}\mapsto \left(\begin{array}{ll}1 & 0 \\
0 & -1\end{array}\right).
\end{align*}

\item[(3)] $W_{x^{3},i,+}^{-}:$ Let $0\leq k\leq 3,~0\leq l\leq 1$,~we define~$\chi_{x^{k}y^{l}}\mapsto 0,$
\begin{align*}
e_{x^{k}y^{l}}\mapsto \left(\begin{array}{ll}\delta_{x^{k}y^{l},x^{3}} & 0 \\
0 & \delta_{x^{k}y^{l},x^{3}y}\end{array}\right),~t\mapsto \left(\begin{array}{ll}0 & -1 \\
1 & 0\end{array}\right),~\zeta_{x}\mapsto \left(\begin{array}{ll}\xi^{i} & 0 \\
0 & \xi^{i}\end{array}\right),~\zeta_{y}\mapsto \left(\begin{array}{ll}-1 & 0 \\
0 & 1\end{array}\right).
\end{align*}

\item[(4)] $W_{x^{3},i,-}^{-}:$ Let $0\leq k\leq 3,~0\leq l\leq 1$,~we define~$\chi_{x^{k}y^{l}}\mapsto 0,$
\begin{align*}
e_{x^{k}y^{l}}\mapsto \left(\begin{array}{ll}\delta_{x^{k}y^{l},x^{3}} & 0 \\
0 & \delta_{x^{k}y^{l},x^{3}y}\end{array}\right),~t\mapsto \left(\begin{array}{ll}0 & -1 \\
1 & 0\end{array}\right),~\zeta_{x}\mapsto \left(\begin{array}{ll}-\xi^{i} & 0 \\
0 & \xi^{i}\end{array}\right),~\zeta_{y}\mapsto \left(\begin{array}{ll}1 & 0 \\
0 & -1\end{array}\right).
\end{align*}
\end{enumerate}
\begin{enumerate}
\item[(S1)]~We denote them through a unified way $W_{\mu,i,\pm}^{\pm}$ for $\mu\in \{x,~x^{3}\},\;0\leq i\leq 3.$
\end{enumerate}

Let $i=0,1$, we construct the following 8 two-dimensional spaces $\k \langle v,~w\rangle$ together with the corresponding $\mathcal{D}(H)$-action:
\begin{enumerate}
\item[(1)] $X_{1,i}^{+}:$~Let $0\leq k\leq 3,~0\leq l\leq 1$,~we define~$\chi_{x^{k}y^{l}}\mapsto 0,$
\begin{align*}
e_{x^{k}y^{l}}\mapsto \left(\begin{array}{ll}\delta_{x^{k}y^{l},1} & 0 \\
0 & \delta_{x^{k}y^{l},1}\end{array}\right),~t\mapsto \left(\begin{array}{ll}0 & 1 \\
1 & 0\end{array}\right),~\zeta_{x}\mapsto \left(\begin{array}{ll}\xi^{i} & 0 \\
0 & -\xi^{i}\end{array}\right),~\zeta_{y}\mapsto \left(\begin{array}{ll}-1 & 0 \\
0 & -1\end{array}\right).
\end{align*}
\item[(2)] $X_{y,i}^{+}:$~Let $0\leq k\leq 3,~0\leq l\leq 1$,~we define~$\chi_{x^{k}y^{l}}\mapsto 0,$
\begin{align*}
e_{x^{k}y^{l}}\mapsto \left(\begin{array}{ll}\delta_{x^{k}y^{l},y} & 0 \\
0 & \delta_{x^{k}y^{l},y}\end{array}\right),~t\mapsto \left(\begin{array}{ll}0 & 1 \\
1 & 0\end{array}\right),~\zeta_{x}\mapsto \left(\begin{array}{ll}\xi^{i} & 0 \\
0 & -\xi^{i}\end{array}\right),~\zeta_{y}\mapsto \left(\begin{array}{ll}1 & 0 \\
0 & 1\end{array}\right).
\end{align*}
\item[(3)] $X_{x^{2},i}^{-}:$ Let $0\leq k\leq 3,~0\leq l\leq 1$,~we define~$\chi_{x^{k}y^{l}}\mapsto 0,$
\begin{align*}
e_{x^{k}y^{l}}\mapsto \left(\begin{array}{ll}\delta_{x^{k}y^{l},x^{2}} & 0 \\
0 & \delta_{x^{k}y^{l},x^{2}}\end{array}\right),~t\mapsto \left(\begin{array}{ll}0 & -1 \\
1 & 0\end{array}\right),~\zeta_{x}\mapsto \left(\begin{array}{ll}\xi^{i} & 0 \\
0 & -\xi^{i}\end{array}\right),~\zeta_{y}\mapsto \left(\begin{array}{ll}-1 & 0 \\
0 & -1\end{array}\right).
\end{align*}
\item [(4)]$X_{x^{2}y,i}^{-}:$ Let $0\leq k\leq 3,~0\leq l\leq 1$,~we define~$\chi_{x^{k}y^{l}}\mapsto 0,$
\begin{align*}
e_{x^{k}y^{l}}\mapsto \left(\begin{array}{ll}\delta_{x^{k}y^{l},x^{2}y} & 0 \\
0 & \delta_{x^{k}y^{l},x^{2}y}\end{array}\right),~t\mapsto \left(\begin{array}{ll}0 & -1 \\
1 & 0\end{array}\right),~\zeta_{x}\mapsto \left(\begin{array}{ll}\xi^{i} & 0 \\
0 & -\xi^{i}\end{array}\right),~\zeta_{y}\mapsto \left(\begin{array}{ll}1 & 0 \\
0 & 1\end{array}\right).
\end{align*}
\end{enumerate}
\begin{enumerate}
\item[(S2)]~We denote them through a unified way $X_{\mu,i}^{\pm}$ for $\mu\in \{1,~y,~x^{2},~x^{2}y\},\;i=0,1.$
\end{enumerate}

Then we construct the following 16 two-dimensional spaces $\k \langle v,~w\rangle$ together with the corresponding $\mathcal{D}(H)$-action:

\begin{enumerate}
\item[(1)]  $Y_{x,1}:~$ Let $0\leq k\leq 3$ and $0\leq l\leq 1$, we define $\zeta_{x^{k}y^{l}}\mapsto 0$,
\begin{align*}
e_{x^{k}y^{l}}\mapsto \left(\begin{array}{ll}\delta_{x^{k}y^{l},x} & 0 \\
0 & \delta_{x^{k}y^{l},xy}\end{array}\right), \quad t\mapsto \left(\begin{array}{ll}0 & -\xi \\
\xi & 0\end{array}\right),~\chi_{x}\mapsto \left(\begin{array}{ll}0 & 1 \\
 1&0\end{array}\right),~\chi_{y}\mapsto \left(\begin{array}{ll}1 & 0 \\
 0&-1\end{array}\right).
\end{align*}

\item[(2)] $Y_{x,2}:$ Let $0\leq k\leq 3$ and $0\leq l\leq 1$, we define $\zeta_{x^{k}y^{l}}\mapsto 0$,
\begin{align*}
e_{x^{k}y^{l}}\mapsto \left(\begin{array}{ll}\delta_{x^{k}y^{l},x} & 0 \\
0 & \delta_{x^{k}y^{l},xy}\end{array}\right), \quad t\mapsto \left(\begin{array}{ll}0 & \xi \\
-\xi & 0\end{array}\right),\quad \chi_{x}\mapsto \left(\begin{array}{ll}0 & 1 \\
 1&0\end{array}\right),~\chi_{y}\mapsto \left(\begin{array}{ll}1 & 0 \\
 0&-1\end{array}\right).
\end{align*}

\item[(3)]  $Y_{x,3}:~$ Let $0\leq k\leq 3$ and $0\leq l\leq 1$, we define $\zeta_{x^{k}y^{l}}\mapsto 0$,
\begin{align*}
e_{x^{k}y^{l}}\mapsto \left(\begin{array}{ll}\delta_{x^{k}y^{l},x} & 0 \\
0 & \delta_{x^{k}y^{l},xy}\end{array}\right), \quad t\mapsto \left(\begin{array}{ll}0 & -\xi \\
\xi & 0\end{array}\right),~\chi_{x}\mapsto \left(\begin{array}{ll}0 & -1 \\
 1&0\end{array}\right),~\chi_{y}\mapsto \left(\begin{array}{ll}-1 & 0 \\
 0&1\end{array}\right).
\end{align*}

\item[(4)] $Y_{x,4}:$ Let $0\leq k\leq 3$ and $0\leq l\leq 1$, we define $\zeta_{x^{k}y^{l}}\mapsto 0$,
\begin{align*}
e_{x^{k}y^{l}}\mapsto \left(\begin{array}{ll}\delta_{x^{k}y^{l},x} & 0 \\
0 & \delta_{x^{k}y^{l},xy}\end{array}\right),~t\mapsto \left(\begin{array}{ll}0 & \xi \\
-\xi & 0\end{array}\right),~\chi_{x}\mapsto \left(\begin{array}{ll}0 & -1 \\
 1&0\end{array}\right),~\chi_{y}\mapsto \left(\begin{array}{ll}-1 & 0 \\
 0&1\end{array}\right).
\end{align*}

\item [(5)] $Y_{x,5}:~$ Let $0\leq k\leq 3$ and $0\leq l\leq 1$, we define $\zeta_{x^{k}y^{l}}\mapsto 0$,
\begin{align*}
e_{x^{k}y^{l}}\mapsto \left(\begin{array}{ll}\delta_{x^{k}y^{l},x} & 0 \\
0 & \delta_{x^{k}y^{l},xy}\end{array}\right),~t\mapsto \left(\begin{array}{ll}0 & 1 \\
1 & 0\end{array}\right),~\chi_{x}\mapsto \left(\begin{array}{ll}0 & -1 \\
 1&0\end{array}\right),~\chi_{y}\mapsto \left(\begin{array}{ll}1 & 0 \\
 0&-1\end{array}\right).
\end{align*}

\item[(6)] $Y_{x,6}:$ Let $0\leq k\leq 3$ and $0\leq l\leq 1$, we define $\zeta_{x^{k}y^{l}}\mapsto 0$,
\begin{align*}
e_{x^{k}y^{l}}\mapsto \left(\begin{array}{ll}\delta_{x^{k}y^{l},x} & 0 \\
0 & \delta_{x^{k}y^{l},xy}\end{array}\right),~t\mapsto \left(\begin{array}{ll}0 & -1 \\
-1 & 0\end{array}\right),~\chi_{x}\mapsto \left(\begin{array}{ll}0 & -1 \\
 1&0\end{array}\right),~\chi_{y}\mapsto \left(\begin{array}{ll}1 & 0 \\
 0&-1\end{array}\right).
\end{align*}

\item [(7)]$Y_{x,7}:$ Let $0\leq k\leq 3$ and $0\leq l\leq 1$, we define $\zeta_{x^{k}y^{l}}\mapsto 0$,
\begin{align*}
e_{x^{k}y^{l}}\mapsto \left(\begin{array}{ll}\delta_{x^{k}y^{l},x} & 0 \\
0 & \delta_{x^{k}y^{l},xy}\end{array}\right),~t\mapsto \left(\begin{array}{ll}0 & 1 \\
1 & 0\end{array}\right)~\chi_{x}\mapsto \left(\begin{array}{ll}0 & 1 \\
 1&0\end{array}\right),~\chi_{y}\mapsto \left(\begin{array}{ll}-1 & 0 \\
 0&1\end{array}\right).
\end{align*}

\item[(8)] $Y_{x,8}:$ Let $0\leq k\leq 3$ and $0\leq l\leq 1$, we define $\zeta_{x^{k}y^{l}}\mapsto 0$,
\begin{align*}
e_{x^{k}y^{l}}\mapsto \left(\begin{array}{ll}\delta_{x^{k}y^{l},x} & 0 \\
0 & \delta_{x^{k}y^{l},xy}\end{array}\right),~t\mapsto \left(\begin{array}{ll}0 & -1 \\
-1 & 0\end{array}\right),\quad \chi_{x}\mapsto \left(\begin{array}{ll}0 & 1 \\
 1&0\end{array}\right),~\chi_{y}\mapsto \left(\begin{array}{ll}-1 & 0 \\
 0&1\end{array}\right).
\end{align*}

\item[(9)]  $Y_{x^{3},1}:~$ Let $0\leq k\leq 3$ and $0\leq l\leq 1$, we define $\zeta_{x^{k}y^{l}}\mapsto 0$,
\begin{align*}
e_{x^{k}y^{l}}\mapsto \left(\begin{array}{ll}\delta_{x^{k}y^{l},x^{3}} & 0 \\
0 & \delta_{x^{k}y^{l},x^{3}y}\end{array}\right),~t\mapsto \left(\begin{array}{ll}0 & \xi \\
\xi & 0\end{array}\right),~\chi_{x}\mapsto \left(\begin{array}{ll}0 & -1 \\
 1&0\end{array}\right),~\chi_{y}\mapsto \left(\begin{array}{ll}1 & 0 \\
 0&-1\end{array}\right).
\end{align*}

\item[(10)]$Y_{x^{3},2}:$ Let $0\leq k\leq 3$ and $0\leq l\leq 1$, we define $\zeta_{x^{k}y^{l}}\mapsto 0$,
\begin{align*}
e_{x^{k}y^{l}}\mapsto \left(\begin{array}{ll}\delta_{x^{k}y^{l},x^{3}} & 0 \\
0 & \delta_{x^{k}y^{l},x^{3}y}\end{array}\right),~t\mapsto \left(\begin{array}{ll}0 & -\xi \\
-\xi & 0\end{array}\right),\quad \chi_{x}\mapsto \left(\begin{array}{ll}0 & -1 \\
 1&0\end{array}\right),~\chi_{y}\mapsto \left(\begin{array}{ll}1 & 0 \\
 0&-1\end{array}\right).
\end{align*}

\item[(11)]  $Y_{x^{3},3}:~$ Let $0\leq k\leq 3$ and $0\leq l\leq 1$, we define $\zeta_{x^{k}y^{l}}\mapsto 0$,
\begin{align*}
e_{x^{k}y^{l}}\mapsto \left(\begin{array}{ll}\delta_{x^{k}y^{l},x^{3}} & 0 \\
0 & \delta_{x^{k}y^{l},x^{3}y}\end{array}\right),~t\mapsto \left(\begin{array}{ll}0 & \xi \\
\xi & 0\end{array}\right),~\chi_{x}\mapsto \left(\begin{array}{ll}0 & 1 \\
 1&0\end{array}\right),~\chi_{y}\mapsto \left(\begin{array}{ll}-1 & 0 \\
 0&1\end{array}\right).
\end{align*}

\item[(12)]$Y_{x^{3},4}:$ Let $0\leq k\leq 3$ and $0\leq l\leq 1$, we define $\zeta_{x^{k}y^{l}}\mapsto 0$,
\begin{align*}
e_{x^{k}y^{l}}\mapsto \left(\begin{array}{ll}\delta_{x^{k}y^{l},x^{3}} & 0 \\
0 & \delta_{x^{k}y^{l},x^{3}y}\end{array}\right),~t\mapsto \left(\begin{array}{ll}0 & -\xi \\
-\xi & 0\end{array}\right),\quad \chi_{x}\mapsto \left(\begin{array}{ll}0 & 1 \\
 1&0\end{array}\right),~\chi_{y}\mapsto \left(\begin{array}{ll}-1 & 0 \\
 0&1\end{array}\right).
\end{align*}
\item[(13)] $Y_{x^{3},5}:~$ Let $0\leq k\leq 3$ and $0\leq l\leq 1$, we define $\zeta_{x^{k}y^{l}}\mapsto 0$,
\begin{align*}
e_{x^{k}y^{l}}\mapsto \left(\begin{array}{ll}\delta_{x^{k}y^{l},x^{3}} & 0 \\
0 & \delta_{x^{k}y^{l},x^{3}y}\end{array}\right),~t\mapsto \left(\begin{array}{ll}0 & -1 \\
1 & 0\end{array}\right),~\chi_{x}\mapsto \left(\begin{array}{ll}0 & 1 \\
 1&0\end{array}\right),~\chi_{y}\mapsto \left(\begin{array}{ll}1 & 0 \\
 0&-1\end{array}\right).
\end{align*}

\item[(14)]$Y_{x^{3},6}:$ Let $0\leq k\leq 3$ and $0\leq l\leq 1$, we define $\zeta_{x^{k}y^{l}}\mapsto 0$,
\begin{align*}
e_{x^{k}y^{l}}\mapsto \left(\begin{array}{ll}\delta_{x^{k}y^{l},x^{3}} & 0 \\
0 & \delta_{x^{k}y^{l},x^{3}y}\end{array}\right),~t\mapsto \left(\begin{array}{ll}0 & 1 \\
-1 & 0\end{array}\right),\quad \chi_{x}\mapsto \left(\begin{array}{ll}0 & 1 \\
 1&0\end{array}\right),~\chi_{y}\mapsto \left(\begin{array}{ll}1 & 0 \\
 0&-1\end{array}\right).
\end{align*}
\item[(15)] $Y_{x^{3},7}:$ Let $0\leq k\leq 3$ and $0\leq l\leq 1$, we define $\zeta_{x^{k}y^{l}}\mapsto 0$,
\begin{align*}
e_{x^{k}y^{l}}\mapsto \left(\begin{array}{ll}\delta_{x^{k}y^{l},x^{3}} & 0 \\
0 & \delta_{x^{k}y^{l},x^{3}y}\end{array}\right),~t\mapsto \left(\begin{array}{ll}0 & -1 \\
1 & 0\end{array}\right),~\chi_{x}\mapsto \left(\begin{array}{ll}0 & -1 \\
 1&0\end{array}\right),~\chi_{y}\mapsto \left(\begin{array}{ll}-1 & 0 \\
 0&1\end{array}\right).
\end{align*}

\item[(16)] $Y_{x^{3},8}:$ Let $0\leq k\leq 3$ and $0\leq l\leq 1$, we define $\zeta_{x^{k}y^{l}}\mapsto 0$,
\begin{align*}
e_{x^{k}y^{l}}\mapsto \left(\begin{array}{ll}\delta_{x^{k}y^{l},x^{3}} & 0 \\
0 & \delta_{x^{k}y^{l},x^{3}y}\end{array}\right),~t\mapsto \left(\begin{array}{ll}0 & 1 \\
-1 & 0\end{array}\right),\quad \chi_{x}\mapsto \left(\begin{array}{ll}0 & -1 \\
 1&0\end{array}\right),~\chi_{y}\mapsto \left(\begin{array}{ll}-1 & 0 \\
 0&1\end{array}\right).
\end{align*}
\end{enumerate}
\begin{enumerate}
\item[(S3)]~We denote them through a unified way $Y_{\mu,j}$ for $\mu\in \{x^{},~x^{3}\},\;0\leq j\leq 8.$
\end{enumerate}

Finally, we construct the following 16 two-dimensional spaces $\k \langle v,~w\rangle$ together with the corresponding $\mathcal{D}(H)$-action:

\begin{enumerate}
\item[(1)] $Z_{1,1}:~$ Let $0\leq k\leq 3$ and $0\leq l\leq 1$, we define $\zeta_{x^{k}y^{l}}\mapsto 0$,
\begin{align*}
e_{x^{k}y^{l}}\mapsto \left(\begin{array}{ll}\delta_{x^{k}y^{l},1} & 0 \\
0 & \delta_{x^{k}y^{l},y}\end{array}\right),~t\mapsto \left(\begin{array}{ll}1 & 0 \\
0 & 1\end{array}\right),~\chi_{x}\mapsto \left(\begin{array}{ll}0 & 1 \\
 1&0\end{array}\right),~\chi_{y}\mapsto \left(\begin{array}{ll}1 & 0 \\
 0&-1\end{array}\right).
\end{align*}

\item[(2)] $Z_{1,2}:~$ Let $0\leq k\leq 3$ and $0\leq l\leq 1$, we define $\zeta_{x^{k}y^{l}}\mapsto 0$,
\begin{align*}
e_{x^{k}y^{l}}\mapsto \left(\begin{array}{ll}\delta_{x^{k}y^{l},1} & 0 \\
0 & \delta_{x^{k}y^{l},y}\end{array}\right),~t\mapsto \left(\begin{array}{ll}-1 & 0 \\
0 & -1\end{array}\right),~\chi_{x}\mapsto \left(\begin{array}{ll}0 & 1 \\
 1&0\end{array}\right),~\chi_{y}\mapsto \left(\begin{array}{ll}1 & 0 \\
 0&-1\end{array}\right).
\end{align*}

\item[(3)] $Z_{1,3}:~$ Let $0\leq k\leq 3$ and $0\leq l\leq 1$, we define $\zeta_{x^{k}y^{l}}\mapsto 0$,
\begin{align*}
e_{x^{k}y^{l}}\mapsto \left(\begin{array}{ll}\delta_{x^{k}y^{l},1} & 0 \\
0 & \delta_{x^{k}y^{l},y}\end{array}\right),~t\mapsto \left(\begin{array}{ll}1 & 0 \\
0 & 1\end{array}\right),~\chi_{x}\mapsto \left(\begin{array}{ll}0 & -1 \\
 1&0\end{array}\right),~\chi_{y}\mapsto \left(\begin{array}{ll}1 & 0 \\
 0&-1\end{array}\right).
\end{align*}

\item[(4)] $Z_{1,4}:~$ Let $0\leq k\leq 3$ and $0\leq l\leq 1$, we define $\zeta_{x^{k}y^{l}}\mapsto 0$,
\begin{align*}
e_{x^{k}y^{l}}\mapsto \left(\begin{array}{ll}\delta_{x^{k}y^{l},1} & 0 \\
0 & \delta_{x^{k}y^{l},y}\end{array}\right),~t\mapsto \left(\begin{array}{ll}-1 & 0 \\
0 & -1\end{array}\right),~\chi_{x}\mapsto \left(\begin{array}{ll}0 & -1 \\
 1&0\end{array}\right),~\chi_{y}\mapsto \left(\begin{array}{ll}1 & 0 \\
 0&-1\end{array}\right).
\end{align*}

\item[(5)] $Z_{1,5}:~$ Let $0\leq k\leq 3$ and $0\leq l\leq 1$, we define $\zeta_{x^{k}y^{l}}\mapsto 0$,
\begin{align*}
e_{x^{k}y^{l}}\mapsto \left(\begin{array}{ll}\delta_{x^{k}y^{l},1} & 0 \\
0 & \delta_{x^{k}y^{l},y}\end{array}\right),~t\mapsto \left(\begin{array}{ll}1 & 0 \\
0 & -1\end{array}\right),~\chi_{x}\mapsto \left(\begin{array}{ll}0 & 1 \\
 1&0\end{array}\right),~\chi_{y}\mapsto \left(\begin{array}{ll}-1 & 0 \\
 0&1\end{array}\right).
\end{align*}

\item[(6)] $Z_{1,6}:~$ Let $0\leq k\leq 3$ and $0\leq l\leq 1$, we define $\zeta_{x^{k}y^{l}}\mapsto 0$,
\begin{align*}
e_{x^{k}y^{l}}\mapsto \left(\begin{array}{ll}\delta_{x^{k}y^{l},1} & 0 \\
0 & \delta_{x^{k}y^{l},y}\end{array}\right),~t\mapsto \left(\begin{array}{ll}-1 & 0 \\
0 & 1\end{array}\right),~\chi_{x}\mapsto \left(\begin{array}{ll}0 & 1 \\
 1&0\end{array}\right),~\chi_{y}\mapsto \left(\begin{array}{ll}-1 & 0 \\
 0&1\end{array}\right).
\end{align*}

\item[(7)] $Z_{1,7}:~$ Let $0\leq k\leq 3$ and $0\leq l\leq 1$, we define $\zeta_{x^{k}y^{l}}\mapsto 0$,
\begin{align*}
e_{x^{k}y^{l}}\mapsto \left(\begin{array}{ll}\delta_{x^{k}y^{l},1} & 0 \\
0 & \delta_{x^{k}y^{l},y}\end{array}\right),~t\mapsto \left(\begin{array}{ll}1 & 0 \\
0 & -1\end{array}\right),~\chi_{x}\mapsto \left(\begin{array}{ll}0 & -1 \\
 1&0\end{array}\right),~\chi_{y}\mapsto \left(\begin{array}{ll}-1 & 0 \\
 0&1\end{array}\right).
\end{align*}

\item[(8)] $Z_{1,8}:~$ Let $0\leq k\leq 3$ and $0\leq l\leq 1$, we define $\zeta_{x^{k}y^{l}}\mapsto 0$,
\begin{align*}
e_{x^{k}y^{l}}\mapsto \left(\begin{array}{ll}\delta_{x^{k}y^{l},1} & 0 \\
0 & \delta_{x^{k}y^{l},y}\end{array}\right),~t\mapsto \left(\begin{array}{ll}-1 & 0 \\
0 & 1\end{array}\right),~\chi_{x}\mapsto \left(\begin{array}{ll}0 & -1 \\
 1&0\end{array}\right),~\chi_{y}\mapsto \left(\begin{array}{ll}-1 & 0 \\
 0&1\end{array}\right).
\end{align*}

\item[(9)] $Z_{x^{2},1}:~$ Let $0\leq k\leq 3$ and $0\leq l\leq 1$, we define $\zeta_{x^{k}y^{l}}\mapsto 0$,
\begin{align*}
e_{x^{k}y^{l}}\mapsto \left(\begin{array}{ll}\delta_{x^{k}y^{l},x^{2}} & 0 \\
0 & \delta_{x^{k}y^{l},x^{2}y}\end{array}\right),~t\mapsto \left(\begin{array}{ll}\xi & 0 \\
0 & \xi\end{array}\right),~\chi_{x}\mapsto \left(\begin{array}{ll}0 & 1 \\
 1&0\end{array}\right),~\chi_{y}\mapsto \left(\begin{array}{ll}1 & 0 \\
 0&-1\end{array}\right).
\end{align*}

\item[(10)] $Z_{x^{2},2}:~$ Let $0\leq k\leq 3$ and $0\leq l\leq 1$, we define $\zeta_{x^{k}y^{l}}\mapsto 0$,
\begin{align*}
e_{x^{k}y^{l}}\mapsto \left(\begin{array}{ll}\delta_{x^{k}y^{l},x^{2}} & 0 \\
0 & \delta_{x^{k}y^{l},x^{2}y}\end{array}\right),~t\mapsto \left(\begin{array}{ll}-\xi & 0 \\
0 & -\xi\end{array}\right),~\chi_{x}\mapsto \left(\begin{array}{ll}0 & 1 \\
 1&0\end{array}\right),~\chi_{y}\mapsto \left(\begin{array}{ll}1 & 0 \\
 0&-1\end{array}\right).
\end{align*}

\item[(11)] $Z_{x^{2},3}:~$ Let $0\leq k\leq 3$ and $0\leq l\leq 1$, we define $\zeta_{x^{k}y^{l}}\mapsto 0$,
\begin{align*}
e_{x^{k}y^{l}}\mapsto \left(\begin{array}{ll}\delta_{x^{k}y^{l},x^{2}} & 0 \\
0 & \delta_{x^{k}y^{l},x^{2}y}\end{array}\right),~t\mapsto \left(\begin{array}{ll}\xi & 0 \\
0 & \xi\end{array}\right),~\chi_{x}\mapsto \left(\begin{array}{ll}0 & -1 \\
 1&0\end{array}\right),~\chi_{y}\mapsto \left(\begin{array}{ll}1 & 0 \\
 0&-1\end{array}\right).
\end{align*}

\item[(12)] $Z_{x^{2},4}:~$ Let $0\leq k\leq 3$ and $0\leq l\leq 1$, we define $\zeta_{x^{k}y^{l}}\mapsto 0$,
\begin{align*}
e_{x^{k}y^{l}}\mapsto \left(\begin{array}{ll}\delta_{x^{k}y^{l},x^{2}} & 0 \\
0 & \delta_{x^{k}y^{l},x^{2}y}\end{array}\right),~t\mapsto \left(\begin{array}{ll}-\xi & 0 \\
0 & -\xi\end{array}\right),~\chi_{x}\mapsto \left(\begin{array}{ll}0 & -1 \\
 1&0\end{array}\right),~\chi_{y}\mapsto \left(\begin{array}{ll}1 & 0 \\
 0&-1\end{array}\right).
\end{align*}

\item[(13)] $Z_{x^{2},5}:~$ Let $0\leq k\leq 3$ and $0\leq l\leq 1$, we define $\zeta_{x^{k}y^{l}}\mapsto 0$,
\begin{align*}
e_{x^{k}y^{l}}\mapsto \left(\begin{array}{ll}\delta_{x^{k}y^{l},x^{2}} & 0 \\
0 & \delta_{x^{k}y^{l},x^{2}y}\end{array}\right),~t\mapsto \left(\begin{array}{ll}\xi & 0 \\
0 & -\xi\end{array}\right),~\chi_{x}\mapsto \left(\begin{array}{ll}0 & 1 \\
 1&0\end{array}\right),~\chi_{y}\mapsto \left(\begin{array}{ll}-1 & 0 \\
 0&1\end{array}\right).
\end{align*}

\item[(14)] $Z_{x^{2},6}:~$ Let $0\leq k\leq 3$ and $0\leq l\leq 1$, we define $\zeta_{x^{k}y^{l}}\mapsto 0$,
\begin{align*}
e_{x^{k}y^{l}}\mapsto \left(\begin{array}{ll}\delta_{x^{k}y^{l},x^{2}} & 0 \\
0 & \delta_{x^{k}y^{l},x^{2}y}\end{array}\right),~t\mapsto \left(\begin{array}{ll}-\xi & 0 \\
0 & \xi\end{array}\right),~\chi_{x}\mapsto \left(\begin{array}{ll}0 & 1 \\
 1&0\end{array}\right),~\chi_{y}\mapsto \left(\begin{array}{ll}-1 & 0 \\
 0&1\end{array}\right).
\end{align*}

\item[(15)] $Z_{x^{2},7}:~$ Let $0\leq k\leq 3$ and $0\leq l\leq 1$, we define $\zeta_{x^{k}y^{l}}\mapsto 0$,
\begin{align*}
e_{x^{k}y^{l}}\mapsto \left(\begin{array}{ll}\delta_{x^{k}y^{l},x^{2}} & 0 \\
0 & \delta_{x^{k}y^{l},x^{2}y}\end{array}\right),~t\mapsto \left(\begin{array}{ll}\xi & 0 \\
0 & -\xi\end{array}\right),~\chi_{x}\mapsto \left(\begin{array}{ll}0 & -1 \\
 1&0\end{array}\right),~\chi_{y}\mapsto \left(\begin{array}{ll}-1 & 0 \\
 0&1\end{array}\right).
\end{align*}

\item[(16)] $Z_{x^{2},8}:~$ Let $0\leq k\leq 3$ and $0\leq l\leq 1$, we define $\zeta_{x^{k}y^{l}}\mapsto 0$,
\begin{align*}
e_{x^{k}y^{l}}\mapsto \left(\begin{array}{ll}\delta_{x^{k}y^{l},x^{2}} & 0 \\
0 & \delta_{x^{k}y^{l},x^{2}y}\end{array}\right),~t\mapsto \left(\begin{array}{ll}-\xi & 0 \\
0 & \xi\end{array}\right),~\chi_{x}\mapsto \left(\begin{array}{ll}0 & -1 \\
 1&0\end{array}\right),~\chi_{y}\mapsto \left(\begin{array}{ll}-1 & 0 \\
 0&1\end{array}\right).
\end{align*}
\end{enumerate}
\begin{enumerate}
\item[(S4)]~We denote them through a unified way $Z_{\mu,j}$ for $\mu\in \{1,~x^{2}\},\;0\leq j\leq 8.$
\end{enumerate}

\begin{lemma}\label{lemma4.20}
\begin{enumerate}
\item Above $56$ two-dimensional spaces~$(S1\mbox{-}S4)$ together with the corresponding action are $\mathcal{D}(H)$\mbox{-}modules.
\item The $\mathcal{D}(H)$\mbox{-}modules $(S1\mbox{-}S4)$ are irreducible.

\end{enumerate}
\end{lemma}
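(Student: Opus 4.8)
The plan is to prove part~(1) by checking, one family at a time, that the assigned matrices satisfy every defining relation of $\mathcal{D}(H)$ recorded in Proposition~\ref{theorem3.1}, and to prove part~(2) by a single uniform argument that, in each module, exhibits a diagonalizable generator with two distinct eigenvalues together with an off-diagonal generator interchanging its two eigenlines. Throughout I would use that $G=S\sqcup T$ with $S=\{1,y,x^2,x^2y\}$ the fixed points of $\triangleleft t$, that $\triangleleft t$ is the automorphism $x\mapsto xy$, $y\mapsto y$, and that each of the $56$ modules is supported (under the idempotents $e_g$) on a two-element set: an orbit $\{x,xy\}$ or $\{x^3,x^3y\}$ inside $T$ for (S1),(S3), a pair of fixed points $\{1,y\}$ or $\{x^2,x^2y\}$ for (S4), while for (S2) both basis vectors sit in a single $V_\mu$, $\mu\in S$.

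First I would dispose of the relations that hold trivially and those that hold by inspection. Since $\chi\equiv0$ on (S1)--(S2) and $\zeta\equiv0$ on (S3)--(S4), the mixed relations $\zeta\chi=0=\chi\zeta$ in (\ref{3-3}) and the half of (\ref{3-5})--(\ref{3-6}) referring to the vanishing family are automatic, and $1_{H^*}=\zeta_1+\chi_1$ acts as the identity through the surviving family. The idempotent relations, $1=\sum_g e_g$, and $e_g\zeta_h=\zeta_h e_g$ in (\ref{3-4})--(\ref{3-5}) are immediate because the $e_g$ and the $\zeta_h$ are diagonal with the stated supports. The relation $te_g=e_{g\triangleleft t}t$ just records that $t$ is (anti)diagonal and permutes the two graded lines exactly by $\triangleleft t$. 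For the surviving part of (\ref{3-5}) on (S3)--(S4) one computes $h(h^{-1}\triangleleft t)=1$ for $h=y$ and $=y$ for $h=x$, so $\chi_y$ preserves the $e$-grading (hence is diagonal) and $\chi_x$ shifts it by $y$ (hence is antidiagonal), exactly the shape of the matrices; and the surviving multiplicative relations in (\ref{3-3}) reduce to $\zeta_x^4=\zeta_y^2=\zeta_1$ with $\zeta_x\zeta_y=\zeta_y\zeta_x$ on (S1)--(S2), and to $\chi_x^4=\chi_y^2=\chi_1$ with $\chi_x\chi_y=-\chi_y\chi_x$ on (S3)--(S4), the anticommutation being the sign $(-1)^{jk}$ in (\ref{3-3}); all of these are visible in the matrices.

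The only relations demanding genuine work are $t^2=\sum_g\sigma(g;t,t)e_g$ in (\ref{3-4}) and the $t$--$\zeta$ (resp. $t$--$\chi$) commutation relations in (\ref{3-6}), since these are where the cocycles $\sigma,\tau$ enter. For the first I would use $\sigma(x^iy^j,t,t)=(-1)^{i(i-1)/2}$ from (\ref{2-2}) to observe that $\sigma(\,\cdot\,;t,t)$ is constant on the support of each module, taking value $1$ on $\{1,y\},\{x,xy\}$ and $-1$ on $\{x^2,x^2y\},\{x^3,x^3y\}$; this forces $t^2=\pm I$ exactly as recorded (and for the $Y$-modules one also uses $\xi^2=-1$). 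For (\ref{3-6}) I would substitute $\tau(x^iy^j,x^ky^l,t)=(-1)^{jk}$ into the product of three $\tau$-factors on the right-hand side and match the resulting sign against the antidiagonal/diagonal entries of $t$ and of $\zeta_x,\zeta_y$ (resp. $\chi_x,\chi_y$), the two members of each $\pm$-pair differing precisely by that sign. I expect this $\tau$-bookkeeping in (\ref{3-6}) to be the main obstacle: not conceptually hard, but requiring the $56$ cases to be organized by their two-element support so that each reduces to one of a handful of generic sign computations rather than being handled individually.

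For part~(2), suppose $U$ is a nonzero proper submodule of one of the modules $M=\k\langle v,w\rangle$; then $\dim U=1$. In each family there is a diagonalizable $D\in\mathcal{D}(H)$ whose two eigenlines are $\k v$ and $\k w$: for (S1),(S3),(S4) one may take a suitable idempotent $e_g$, whose support separates $v$ and $w$ because the two graded components are indexed by distinct group elements ($\mu\neq\mu\triangleleft t$ on $T$, and $1\neq y$, $x^2\neq x^2y$); for (S2) the two vectors share one $V_\mu$ but are separated by $\zeta_x=\mathrm{diag}(\xi^i,-\xi^i)$, whose eigenvalues $\pm\xi^i$ are distinct. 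Being $D$-invariant, $U$ must be one of these eigenlines. But each module also carries a generator swapping $\k v$ and $\k w$, namely the antidiagonal $t$ on (S1)--(S3) and the antidiagonal $\chi_x$ on (S4); this operator maps either eigenline onto the other, so no $1$-dimensional $U$ can be invariant. This contradiction shows every module in (S1)--(S4) is irreducible, completing the proof.
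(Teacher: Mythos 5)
Your proposal is correct, and part~(1) takes a genuinely different route from the paper. The paper never verifies the relations against the given matrices: for the only family it treats explicitly, $(S3)$, it argues in the opposite direction, starting from a character $\beta$ of the commutative subalgebra $A_{S}=\Bbbk\langle \chi_{s}\mid s\in S\rangle$ on a vector $v\in\chi_{1}V_{\mu}$, setting $w=\chi_{x}v$, and using the relations of Proposition~\ref{theorem3.1} to \emph{force} the entire action, the one consistency condition being $\lambda^{2}\tau(x\mu y,x^{3}y,t)\beta(y)\beta(x^{2})=\sigma(\mu,t,t)$, which pins down $\lambda$ up to sign. That derivation buys exhaustiveness---it shows the $16$ modules in $(S3)$ are the only ones of that shape, which is what the count in Theorem~\ref{theorem4.7} ultimately leans on---whereas your direct check of the relations establishes existence only. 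Your organization of the verification is sound and uniform across all four families (constancy of $\sigma(\,\cdot\,;t,t)$ on each two-element support gives $t^{2}=\pm\mathrm{id}$; $h(h^{-1}\triangleleft t)\in\{1,y\}$ makes $\chi_{y}$ diagonal and $\chi_{x}$ antidiagonal; the sign $(-1)^{jk}$ in (\ref{3-3}) gives $\chi_{x}\chi_{y}=-\chi_{y}\chi_{x}$), but note that the step you defer---the $\tau$-sign bookkeeping in (\ref{3-6})---is precisely where the nontrivial content lies, so a complete write-up must carry it out for at least one representative of each support class, exactly as the paper's displayed computation does for $(S3)$. For part~(2) you and the paper use the same idea (a diagonal generator separating $\Bbbk v$ from $\Bbbk w$ played against an antidiagonal generator swapping them), but your version is tighter: the paper only excludes lines $\Bbbk\langle v+\lambda w\rangle$ with $\lambda\neq 0$, since $\zeta_{y}$ (resp.\ $\chi_{y}$) does fix the lines $\Bbbk v$ and $\Bbbk w$; your observation that any invariant line must be one of these eigenlines, which are then interchanged by $t$ on $(S1)$--$(S3)$ and by $\chi_{x}$ on $(S4)$, closes that small gap.
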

\begin{proof}
\begin{enumerate}
\item We only verify the case $(S3)$ since the other cases can be proved similarly. Let $\mu\in \{x^{},~x^{3}\}$. Suppose ~$v\in\chi_{1}V_{\mu}$,~$w=\chi_{x}\cdot v$ and $V=\Bbbk\langle v,w\rangle$. Since $e_{g}\chi_{h}=\chi_{h}e_{gh(h^{-1}\triangleleft t)}$, it follows that $w\in\chi_{1}V_{\mu y}$. Since $t\cdot v_{\mu}\in \chi_{1} V_{\mu y}$, we can suppose $t\cdot v=\lambda w$. Let $A_{S}=\Bbbk\langle \chi_{s}\rangle_{s\in S}$ be an algebra generated by $\{\chi_{s}\mid s\in S\}$. Due to~$(\ref{3-3})$, it follows that $A_{S}$ is a commutative algebra. Since every  simple module over $A_{S}$ is $1$\mbox{-}dimensional, we can assume
\begin{equation} \label{equation4.011}
\chi_{x^{2}}v=\widetilde{m} v,\quad \chi_{y}v=\widetilde{n} v,
\end{equation}
 where $\widetilde{m},\widetilde{n}\in \Bbbk$.  By relation~(\ref{3-3}), it follows that
\begin{align}
(\chi_{x^{2}})^{2}=\chi_{x^{4}}=\chi_{1},\quad\quad (\chi_{y})^{2}=\chi_{y^{2}}=\chi_{1}.
\end{align}
Thus, we have
 \begin{equation} \label{equation4.0}
 v\stackrel{}{=}\chi_{1}v\stackrel{(4.7)}{=}(\chi_{x^{4}})v\stackrel{(4.7)}{=}(\chi_{x^{2}})^{2}v=\widetilde{m}^{2}v,\quad v\stackrel{}{=}\chi_{1}v\stackrel{(4.7)}{=}\chi_{y^{2}}v\stackrel{(4.7)}{=}(\chi_{y})^{2}v=\widetilde{n}^{2}v.
 \end{equation}

 Let $\chi_{s}v=\beta(s)v$ for $s\in S$ defined in~$(\ref{equation4.011})$. Then $(\beta(x^{2}))^{2}=\beta(x^{4})=1,~(\beta(y))^{2}=\beta(y^{2})=1.$ Since $S$ is the subgroup of $G$ and $T=Sx$, we have following actions of $\mathcal{D}(H)$:
\begin{equation}\label{4-12-0}
\begin{array}{l}
\chi_{sx}(v)=\tau(x,~s, t)\beta(s)w=\beta(s)w,\\
\chi_{sx}(w)=\tau(s,~x, t)\beta(s)\beta(x^{2})v,\\
\chi_{s}w=\beta(s)\tau(s,x,t)w,\chi_{s}v=\beta(s)v,\\
t\cdot w=t\cdot \chi_{x}v=\lambda\tau(x\mu y, x^{3}y,t)\beta(y)\beta(x^{2})v,\\
t^{2}v=\lambda^{2}\tau(x\mu y, x^{3}y,t)\beta(y)\beta(x^{2})v=\sigma(\mu,t,t)v,\\
\lambda^{2}\tau(x\mu y, x^{3}y,t)\beta(y)\beta(x^{2})=\sigma(\mu,t,t).\\
\end{array}
\end{equation}
Due to~$(\ref{equation4.0})$ and~(\ref{4-12-0}), it follows that
\begin{equation}\label{4.140}
\widetilde{m}=\beta(x^{2})=\pm1,\quad \widetilde{n}=\beta(y)=\pm 1,\quad\lambda^{2}=\frac{\sigma(\mu,t,t)}{-\beta(y)\beta(x^{2})}.
\end{equation}
i.e. $\lambda$ is determined by $\beta(x^{2})$ and $\beta(y)$. Thus, there are 16 two-dimensional left-$\mathcal{D}(H)$modules as $(S3)$.

\item
\begin{enumerate}
\item[(a)] If $\Bbbk\langle v+\lambda w\rangle$ is a $\mathcal{D}(H)\mbox{-}$submodule of $(S1)$, then $\zeta_{x^{k}y^{l}}(v+\lambda w)~(0\leq k\leq 3,~0\leq l\leq 1)$ and $t(v+\lambda w)$ belong to $\Bbbk\langle v+\lambda w\rangle$. But $$\zeta_{y}(v+\lambda w)=v-\lambda w~\text{or}~-v+\lambda w\notin\Bbbk\langle v+\lambda w\rangle,$$ this leads a contradiction. Thus, there are 16 non-isomorphic two-dimensional simple $\mathcal{D}(H)\mbox{-}$modules as $(S1)$.
  \item[(b)]$(S2)$ follow in a similar manner as $(a)$.
  \item[(c)]If $\Bbbk\langle v+\lambda w\rangle$ is a $\mathcal{D}(H)\mbox{-}$submodule of $(S3)$, then $\chi_{x^{k}y^{l}}(v+\lambda w)$ must belong to $\Bbbk\langle v+\lambda w\rangle~(0\leq k\leq 3,~0\leq l\leq 1).$ But $\chi_{y}(v+\lambda w)\notin \Bbbk\langle v+\lambda w\rangle$ by directly computation. Thus, there are 16 non-isomorphic two-dimensional simple $\mathcal{D}(H)\mbox{-}$modules, as shown in the $(S3)$.
\item[(d)]The $\mathcal{D}(H)\mbox{-}$modules in $(S4)$ are simple by the same arguments as $(c)$.
\end{enumerate}
\end{enumerate}
\end{proof}

With these preparations, we can give the first main result.
\begin{theorem}\label{theorem4.7}
There are exactly 88 non-isomorphic simple Yetter-Drinfel'd modules over $H$, and 32 of them are one-dimensional and the other 56 ones are two-dimensional.
\begin{center}
 \emph{\texttt{TABLE $1$. Simple Yetter-Drinfel'd modules over}} $H$
\scalebox{0.9}{
\begin{tabular}{r|l}
\hline
(S0)& $V_{1,0}^{+}$,~$V_{1,1}^{+}$,~$V_{1,2}^{+}$,~$V_{1,3}^{+}$,~$V_{1,0}^{-}$,~$V_{1,1}^{-}$,~$V_{1,2}^{-}$,~$V_{1,3}^{-}$\\

& $V_{y,0}^{+}$,~$V_{y,1}^{+}$,~$V_{y,2}^{+}$,~$V_{y,3}^{+}$,~$V_{1,0}^{-}$,~$V_{y,1}^{-}$,~$V_{y,2}^{-}$,~$V_{y,3}^{-}$\\

& $V_{x^{2},0}^{+}$,~$V_{x^{2},1}^{+}$,~$V_{x^{2},2}^{+}$,~$V_{x^{2},3}^{+}$,~$V_{x^{2},0}^{-}$,~$V_{x^{2},1}^{-}$,~$V_{x^{2},2}^{-}$,~$V_{x^{2},3}^{-}$\\

&$V_{x^{2}y,0}^{+}$,~$V_{x^{2}y,1}^{+}$,~$V_{x^{2}y,2}^{+}$,~$V_{x^{2}y,3}^{+}$,~$V_{x^{2}y,0}^{-}$,~$V_{x^{2}y,1}^{-}$,~$V_{x^{2}y,2}^{-}$,~$V_{x^{2}y,3}^{-}$\\
\hline
(S1)& $W_{x,0,-}^{+}$,~$W_{x,1,-}^{+}$,~$W_{x,2,-}^{+}$,~$W_{x,3,-}^{+}$,  $W_{x,0,+}^{+}$,~$W_{x,1,+}^{+}$,~$W_{x,2,+}^{+}$,~$W_{x,3,+}^{+}$\\

& $W_{x^{3},0}^{-}$,~$W_{x^{3},1}^{-}$,~$W_{x^{3},2}^{-}$,~$W_{x^{3},3}^{-}$ , $W_{x^{3},0}^{-}$,~$W_{x^{3},1}^{-}$,~$W_{x^{3},2}^{-}$,~$W_{x^{3},3}^{-}$\\
\hline
$(S2)$& $X_{1,0}^{+}$,~$X_{1,1}^{+}$,~$X_{y,0}^{+}$,~$X_{y,1}^{+}$,~$X_{x^{2},0}^{-}$,~$X_{x^{2},1}^{-}$,~$X_{x^{2}y,0}^{-}$,~$X_{x^{2}y,1}^{-}$\\
\hline

$(S3)$& $Y_{x,1}$,~$Y_{x,2}$,~$Y_{x,3}$,~$Y_{x,4}$,~$Y_{x,5}$,~$Y_{x,6}$,~$Y_{x,7}$,~$Y_{x,8}$,\\

& $Y_{x^{3},1}$,~$Y_{x^{3},2}$,~$Y_{x^{3},3}$,~$Y_{x^{3},4}$,~$Y_{x^{3},5}$,~$Y_{x^{3},6}$,~$Y_{x^{3},7}$,~$Y_{x^{3},8}$,\\
\hline
$(S4)$& $Z_{1,1}$,~$Z_{1,2}$,~$Z_{1,3}$,~$Z_{1,4}$,~$Z_{1,5}$,~$Z_{1,6}$,~$Z_{1,7}$,~$Z_{1,8}$,\\
& $Z_{x^{2},1}$,~$Z_{x^{2},2}$,~$Z_{x^{2},3}$,~$Z_{x^{2},4}$,~$Z_{x^{2},5}$,~$Z_{x^{2},6}$,~$Z_{x^{2},7}$,~$Z_{x^{2},8}$,\\
\hline
\end{tabular}}
\end{center}
\begin{proof}
By Lemmas~\ref{lemma4.1},~\ref{lemma4.20}, there are 32 non-isomorphic one-dimensional and 56 non-isomorphic two-dimensional simple Yetter-Drinfel'd modules over $H$. Due to Lemma~\ref{lemma2.3} and $32\cdot 1^{2}+56\cdot 2^{2}=256={\rm dim}\mathcal{D}(H)$, we finish the proof.
 \end{proof}
\end{theorem}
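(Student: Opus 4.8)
The plan is to transport the classification to the module theory of the finite-dimensional algebra $\mathcal{D}(H)$ and then close it with a single dimension count. By Lemma~\ref{lemma2.3} the category ${}_{H}\mathcal{Y}\mathcal{D}^{H}$ is equivalent to ${}_{\mathcal{D}(H)}M$, so classifying the simple Yetter--Drinfel'd modules over $H$ is exactly classifying the simple left $\mathcal{D}(H)$-modules. The candidates are already in hand: Lemma~\ref{lemma4.1} produces the $32$ one-dimensional simples of (S0) and shows that every one-dimensional simple occurs there, while Lemma~\ref{lemma4.20} shows that the $56$ spaces listed in (S1)--(S4) are irreducible two-dimensional $\mathcal{D}(H)$-modules. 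It therefore remains to verify that these $88$ modules are pairwise non-isomorphic and that no further simple module can exist.

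First I would dispose of pairwise non-isomorphism by reading isomorphism invariants directly off the defining matrices. Modules of different $\Bbbk$-dimension cannot be isomorphic, so (S0) is automatically separated from (S1)--(S4); within (S0) the $32$ one-dimensional modules are distinguished by the scalars recorded in Lemma~\ref{lemma4.1}. For the two-dimensional families, the simultaneous eigenspace decomposition for the orthogonal idempotents $\{e_{g}\}_{g\in G}$ determines the support in $G$, which separates the families supported on $T=\{x,xy,x^{3},x^{3}y\}$ (namely (S1),(S3)) from those supported on $S=\{1,y,x^{2},x^{2}y\}$ (namely (S2),(S4)), and separates modules supported on a single group element from those supported on two. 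The vanishing of the $\chi$-action versus the $\zeta$-action then separates (S1),(S2) from (S3),(S4), and inside each family the members are told apart by the eigenvalues of $t$ together with the scalars $\xi^{i}$ and $\pm1$ appearing in $\zeta_{x},\zeta_{y}$ or $\chi_{x},\chi_{y}$. These are the very quantities already computed in Lemmas~\ref{lemma4.1} and~\ref{lemma4.20}, so this step is bookkeeping rather than fresh computation.

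The decisive step is to prove completeness, and here I would invoke Artin--Wedderburn. Writing $J$ for the Jacobson radical of $A:=\mathcal{D}(H)$, the quotient $A/J$ is semisimple, so over the algebraically closed field $\Bbbk$ the simple $A$-modules are in bijection with the simple factors of $A/J$ and $\sum_{S}(\dim_{\Bbbk}S)^{2}=\dim_{\Bbbk}(A/J)\leq \dim_{\Bbbk}A=256$, the sum running over all isomorphism classes of simple $A$-modules. The $88$ pairwise non-isomorphic simples found above already contribute $32\cdot 1^{2}+56\cdot 2^{2}=256$. Since $256\leq \sum_{S}(\dim S)^{2}\leq 256$, every inequality is an equality: there can be no other simple module, and as a by-product $J=0$, so $\mathcal{D}(H)$ is semisimple. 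Alternatively one may note at the outset that $H$ is semisimple, hence cosemisimple in characteristic zero by Larson--Radford, so that $\mathcal{D}(H)$ is semisimple and the identity $\dim\mathcal{D}(H)=\sum_{S}(\dim S)^{2}$ holds from the start.

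The substantive content is thus packed into the two preceding lemmas, and the main obstacle is conceptual rather than computational: the numerical coincidence $32+4\cdot 56=256=\dim\mathcal{D}(H)$ certifies completeness only after one knows the listed simples form a complete, irredundant set of irreducibles: irreducibility from Lemmas~\ref{lemma4.1} and~\ref{lemma4.20}, and irredundancy from the non-isomorphism check above. Once both are in place the count closes the argument immediately.
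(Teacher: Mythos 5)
Your proposal is correct and follows essentially the same route as the paper: existence and irreducibility from Lemmas~\ref{lemma4.1} and~\ref{lemma4.20}, the category equivalence of Lemma~\ref{lemma2.3}, and the count $32\cdot 1^{2}+56\cdot 2^{2}=256=\dim\mathcal{D}(H)$ to certify completeness. You merely make explicit what the paper leaves implicit, namely the Artin--Wedderburn inequality $\sum_{S}(\dim S)^{2}\leq\dim\mathcal{D}(H)$ (or, alternatively, semisimplicity of $\mathcal{D}(H)$ via Larson--Radford) and the cross-family non-isomorphism check via invariants, both of which are sound.
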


\subsection{Left-left Yetter-Drinfel'd modules}

\begin{lemma}
Fix a basis of each module in $(S0\mbox{-}S4)$, the following table give us braidings of $\mathcal{D}(H)\mbox{-}$modules~$(S0\mbox{-}S4)$ as left-left Yetter-Drinfel'd modules over $H$.

\begin{center}
 \emph{\texttt{TABLE $2$. Braidings of $\mathcal{D}(H)\mbox{-}$modules in~$(S0,S1,S2,S4)$}}
\scalebox{0.9}{
\begin{tabular}{r|l}
\hline
$(S0)$ & $c(v\otimes v)=\pm (v\otimes v)$\\

$(S1)$ & $q=\left(\begin{array}{ll}\xi^{i}& \xi^{i}  \\
-\xi^{i} & \xi^{i}\end{array}\right),~~~\left(\begin{array}{ll}\xi^{i}& -\xi^{i}  \\
\xi^{i} & \xi^{i}\end{array}\right)~(i=0,~1,~2,~3)$\\
$(S2)$ & $q=\left(\begin{array}{ll}1& 1  \\
1 & 1\end{array}\right),~~~\left(\begin{array}{ll}-1& -1  \\
-1 & -1\end{array}\right)$\\
$(S4)$ &$q=\left(\begin{array}{ll}\pm 1& \mp1  \\
\pm1 & \pm1\end{array}\right),~~~\left(\begin{array}{ll}\pm \xi& \mp \xi  \\
\pm \xi & \pm \xi\end{array}\right),$ \\
& \\
\hline
\end{tabular}}
\end{center}

\begin{center}
 \emph{\texttt{TABLE $3$. Braidings of $\mathcal{D}(H)\mbox{-}$modules in~$(S3)$}}
\scalebox{0.9}{
\begin{tabular}{|c|c|c|c|c|c|c|c|c|c|c|c|c|}

\hline
&   $c(v\otimes v)={\pm\xi}w\otimes w$ & $c(v\otimes v)=\mp w\otimes w$ & $c(v\otimes v)=\pm w\otimes w$ & $c(v\otimes v)={\mp  \xi}w\otimes w$ \\
 &$c(v\otimes w)={\pm \xi}v\otimes w$ & $c(v\otimes w)=\pm v\otimes w$ & $c(v\otimes w)=\pm v\otimes w$ & $c(v\otimes w)=\pm\xi v\otimes w$  \\
 $\mu=x$ &$c(w\otimes v)=\pm\xi w\otimes v$&$c(w\otimes v)=\pm w\otimes v$&  $c(w\otimes v)=\pm w\otimes v$ &$c(w\otimes v)=\pm{\xi}w\otimes v$  \\
  & $c(w\otimes w)={\mp \xi}v\otimes v$&$c(w\otimes w)=\pm v\otimes v$& $c(w\otimes w)=\mp v\otimes v$& $c(w\otimes w)=\pm\xi v\otimes v$  \\
\hline
& $c(v\otimes v)={\mp }w\otimes w$    & $c(v\otimes v)=\mp\xi w\otimes w$ & $c(v\otimes v)=\mp{\xi} w\otimes w$  & $c(v\otimes v)=\mp w\otimes w$ \\
      &   $c(v\otimes w)=\mp v\otimes w$  & $c(v\otimes w)=\pm {\xi} v\otimes w$ & $c(v\otimes w)=\mp{\xi}v\otimes w$   & $c(v\otimes w)=\pm v\otimes w$  \\
$\mu=x^{3}$ &  $c(w\otimes v)=\mp w\otimes v$   &$c(w\otimes v)=\pm{\xi} w\otimes v$&  $c(w\otimes v)=\mp{\xi}w\otimes v$     &$c(w\otimes v)=\pm w\otimes v$  \\
  &       $c(w\otimes w)=\pm v\otimes v$    &$c(w\otimes w)=\pm{\xi}v\otimes v$& $c(w\otimes w)=\pm{ \xi}v\otimes v$    & $c(w\otimes w)=\pm v\otimes v$  \\
\hline

\end{tabular}}
\end{center}

\begin{proof}

We only verify the case $(S3)$. By the action of $\mathcal{D}(H)$, we can give following $H$\mbox{-}comodule structure. Then we determine the braidings $c_{V,W}$ for $V,W\in { }_{H}^{H} \mathcal{Y}\mathcal{D}$.
\begin{equation*}
\begin{array}{l}
\rho(v)=v\otimes\Sigma_{s\in S}\beta(s)(e_{s} t)+w\otimes \Sigma_{s\in S}\beta(s)(e_{xs} t),\\
\rho(w)=w\otimes\Sigma_{s\in S}\beta(s)\tau(s,x,t)(e_{s} t)+v\otimes \Sigma_{s\in S}\beta(s)\beta(x^{2})\tau(s,x,t)S(e_{xs} t),\\
\rho_{l}(v)=\Sigma_{s\in S}\beta(s)S(e_{s} t)\otimes v+ \Sigma_{s\in S}\beta(s)S(e_{xs} t)\otimes w,\\
\rho_{l}(w)=\Sigma_{s\in S}\beta(s)\tau(s,x,t)S(e_{s} t)\otimes w+\Sigma_{s\in S}\beta(s)\beta(x^{2})\tau(s,x,t)S(e_{xs} t)\otimes v,\\
c(v\otimes v)=\Sigma_{s\in S}[\beta(s)\mathcal{S}(e_{s} t)\cdot v\otimes v]+ \Sigma_{s\in S}[\beta(s)\mathcal{S}(e_{xs} t)\cdot v\otimes w]=\Sigma_{s\in S}[\beta(s)\\
\mathcal{S}(e_{xs} t)\cdot v\otimes w],\\
c(v\otimes w)=\Sigma_{s\in S}[\beta(s)\mathcal{S}(e_{s} t)\cdot w\otimes v]+ \Sigma_{s\in S}[\beta(s)\mathcal{S}(e_{xs} t)\cdot w\otimes w]=\Sigma_{s\in S}[\beta(s)\\
\mathcal{S}(e_{xs} t)\cdot w\otimes w],\\
c(w\otimes v)=\Sigma_{s\in S}[\beta(s)\tau(s,x,t)\mathcal{S}(e_{s} t)\cdot v\otimes w]+\Sigma_{s\in S}[\beta(s)\beta(x^{2})\tau(s,x,t)\\
S(e_{xs} t)\cdot v\otimes v]=\Sigma_{s\in S}[\beta(s)\beta(x^{2})\tau(s,x,t)\mathcal{S}(e_{xs} t)\cdot v\otimes v],\\
c(w\otimes w)=\Sigma_{s\in S}[\beta(s)\tau(s,x,t)S(e_{s} t)\cdot w\otimes w]+\Sigma_{s\in S}[\beta(s)\beta(x^{2})\tau(s,x,t),\\
S(e_{xs} t)\cdot w\otimes v] =\Sigma_{s\in S}[\beta(s)\beta(x^{2})\tau(s,x,t)\mathcal{S}(e_{xs} t)\cdot w\otimes v].\\
\end{array}
\end{equation*}
Due to (\ref{2-2}), we have the following formula which will be used to calculate the braiding.
\begin{equation}
\begin{array}{l}
\quad S(e_{1} t)=e_{1} t,\quad \quad\quad\quad\quad \quad S(e_{y} t)=e_{y} t,\quad\quad\quad \quad S(e_{xy} t)=e_{x^{3}} t, \\
S(e_{x^{3}} t)=(e_{xy} t), \quad\quad\quad \quad\quad S(e_{x} t)=-e_{x^{3}y} t, \quad\quad S(e_{x^{2}} t)=-e_{x^{2}} t,\\
S(e_{x^{2}y} t)=-e_{x^{2}y} t,\quad\quad\quad\quad S(e_{x^{3}y} t)=-(e_{x} t).\\
\end{array}
\end{equation}
Let $\mu=x$, we get the following braiding:
\begin{equation*}
\begin{array}{l}
c(v\otimes v)=(-e_{x^{3}y}t+\beta(y)e_{x^{3}}t+\beta(x^{2})e_{xy}t-\beta(x^{2}y)e_{x}t)\cdot v\otimes w=\lambda\beta(x^{2})w\otimes w,\\
c(v\otimes w)=(-e_{x^{3}y}t+\beta(y)e_{x^{3}}t+\beta(x^{2})e_{xy}t-\beta(x^{2}y)e_{x}t)\cdot w\otimes w=\lambda v\otimes w,\\
c(w\otimes v)=(-\beta(x^{2})e_{x^{3}y}t-\beta(x^{2}y)e_{x^{3}}t+e_{xy}t+\beta(y)e_{x}t)\cdot v\otimes v=\lambda w\otimes v,\\
c(w\otimes w)=(-\beta(x^{2})e_{x^{3}y}t-\beta(x^{2}y)e_{x^{3}}t+e_{xy}t+\beta(y)e_{x}t)\cdot w\otimes v=-\lambda \beta(x^{2})v\otimes v.\\
\end{array}
\end{equation*}
Let $\mu=x^{3}$, we get the following braiding:
\begin{equation*}
\begin{array}{l}
c(v\otimes v)=(-e_{x^{3}y}t+\beta(y)e_{x^{3}}t+\beta(x^{2})e_{xy}t-\beta(x^{2}y)e_{x}t)\cdot v\otimes w=-\lambda w\otimes w,\\
c(v\otimes w)=(-e_{x^{3}y}t+\beta(y)e_{x^{3}}t+\beta(x^{2})e_{xy}t-\beta(x^{2}y)e_{x}t)\cdot w\otimes w=-\lambda \beta(x^{2})v\otimes w,\\
c(w\otimes v)=(-\beta(x^{2})e_{x^{3}y}t-\beta(x^{2}y)e_{x^{3}}t+e_{xy}t+\beta(y)e_{x}t)\cdot v\otimes v=-\lambda \beta(x^{2}) w\otimes v,\\
c(w\otimes w)=(-\beta(x^{2})e_{x^{3}y}t-\beta(x^{2}y)e_{x^{3}}t+e_{xy}t+\beta(y)e_{x}t)\cdot w\otimes v=\lambda v\otimes v.
\end{array}
\end{equation*}

Thus, we get the braidings of left-left Yetter-Drinfel'd modules over $H$ of table $(S3)$.
\end{proof}
\end{lemma}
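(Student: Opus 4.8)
The plan is to run the dictionary of Remark~\ref{remark2.4} on each family. Every module in $(S0)$--$(S4)$ is handed to us as a left $\mathcal{D}(H)$-module, hence, via Lemma~\ref{lemma2.3}, as a left-right Yetter-Drinfel'd module $(V,\cdot,\rho)$; part~(1) of the remark turns it into a left-left one through $\rho_{l}(v)=\mathcal{S}(v_{1})\otimes v_{0}$, and part~(2) then computes the braiding as $c_{V,W}(v\otimes w)=v_{-1}\cdot w\otimes v_{0}$. So the work splits into three mechanical stages, which I would carry out on a single generic member of each family, recording the answer in terms of the scalars $\beta(s)$ and $\lambda$ fixed in Lemma~\ref{lemma4.20}. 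The first stage recovers the right comodule $\rho$ from the action of the dual generators: since $\{e_{g},\,e_{g}t\}_{g\in G}$ is a basis of $H$ whose dual basis is precisely $\{\zeta_{g},\,\chi_{g}\}_{g\in G}$ by~\eqref{x}, we have
\[
\rho(v)=\sum_{g\in G}(\zeta_{g}\cdot v)\otimes e_{g}+\sum_{g\in G}(\chi_{g}\cdot v)\otimes e_{g}t .
\]
For $(S3)$ the $\zeta$'s act as zero and the eigenvalues $\chi_{s}v=\beta(s)v$, $\chi_{sx}v=\beta(s)w$ ($s\in S$) collapse this to the two sums displayed in the proof; for $(S0)$, $(S1)$, $(S2)$, $(S4)$ it is instead the $\chi$'s that vanish and the $\zeta$-eigenvalues read off from the defining matrices that survive.

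The second stage passes from $\rho$ to $\rho_{l}$ by applying $\mathcal{S}$ term by term. This requires only the explicit values of $\mathcal{S}$ on the eight elements $e_{g}t$, which follow from the antipode formula in~\eqref{3.26} together with the concrete data~\eqref{2-2} for $\triangleleft$, $\sigma$, $\tau$; I would tabulate these eight identities once and reuse them (they are exactly the list preceding the case $\mu=x$).

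The third stage substitutes $\rho_{l}$ into $c(v\otimes w)=v_{-1}\cdot w\otimes v_{0}$. Here the left $H$-action is completely pinned down by $e_{g}v=\delta_{g,\mu}v$, $e_{g}w=\delta_{g,\mu y}w$ and by $t\cdot v=\lambda w$, so that $(e_{g}t)\cdot v=\lambda\,\delta_{g,\mu y}w$ and similarly on $w$; consequently exactly one term survives in each sum. Feeding in the surviving antipode value and then the normalizations $\beta(x^{2}),\beta(y)\in\{\pm1\}$ together with $\lambda^{2}=\sigma(\mu,t,t)/(-\beta(y)\beta(x^{2}))$ from~\eqref{4.140} turns the four coefficients $c(v\otimes v)$, $c(v\otimes w)$, $c(w\otimes v)$, $c(w\otimes w)$ into the $\pm1,\pm\xi$ entries of TABLE~$3$, the various sign alternatives corresponding to the choices of $\beta(x^{2})$, $\beta(y)$ and of the two square roots $\lambda$. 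Since the remaining families are diagonal in the chosen basis, the identical recipe yields the scalar, respectively $2\times2$, braiding data of TABLE~$2$.

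The genuine obstacle is bookkeeping rather than conceptual: one must propagate the cocycle factors $\tau(s,x;t)$ correctly through both the comodule coefficients and the antipode, and select each surviving term with the right sign. In particular the visible asymmetry between the entries $c(v\otimes v),c(w\otimes w)$ (which acquire an extra factor $\beta(x^{2})$ and a relative sign) and the entries $c(v\otimes w),c(w\otimes v)$ arises entirely from whether $\mathcal{S}(e_{xs}t)$ falls back into $T$ or into $S$, and from the minus signs that $\mathcal{S}$ attaches to $e_{x}t$ and $e_{x^{3}y}t$. I would therefore double-check the antipode table and the parities of the relevant $\tau$-values before collapsing the sums, as this is where a stray sign would slip in.
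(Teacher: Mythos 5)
Your proposal follows exactly the paper's route: reconstruct the right coaction from the action of the dual generators (the paper does this implicitly, you state the dual-basis formula $\rho(v)=\sum_{g}(\zeta_{g}\cdot v)\otimes e_{g}+\sum_{g}(\chi_{g}\cdot v)\otimes e_{g}t$ explicitly), convert to a left coaction via Remark~2.4(1) using the tabulated values of $\mathcal{S}(e_{g}t)$, and evaluate $c(u\otimes u')=u_{-1}\cdot u'\otimes u_{0}$ with the surviving terms expressed through $\beta(x^{2}),\beta(y),\lambda$ and the constraint $\lambda^{2}=\sigma(\mu,t,t)/(-\beta(y)\beta(x^{2}))$. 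This is the same computation carried out in the paper's proof for $(S3)$ (with the diagonal families handled by the same mechanism), so the proposal is correct and essentially identical in approach.
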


\section{Finite Dimensional Nichols algebras of ${ }_{H}^{H} \mathcal{Y} \mathcal{D}$}

In the previous section, we already gave all simple Yetter-Drinfel'd modules $V$ over $H$. In this section, we want to determine all the finite dimensional Nichols algebras of simple Yetter-Drinfel'd modules over $H$.

First, we give the Nichols algebras of Yetter-Drinfel'd modules that appear in $(S0)$.
\begin{lemma}\label{lemma5.1}
The Nichols algebras of one-dimensional simple Yetter-Drinfel'd modules $(V_{\mu,i}^{\pm},c_{\pm1})~(i=0,1,2,3)$ of $H$ are as follows,
\begin{enumerate}
\item $\mathcal{B}((V_{\mu,i}^{\pm},c_{1}))\simeq \Bbbk[v]$, where $c_{1}(v\otimes v)=v\otimes v$.
\item $\mathcal{B}((V_{\mu,i}^{\pm},c_{-1}))\simeq \Bbbk[v]/\langle v^{2}\rangle=\bigwedge\Bbbk \langle v \rangle$, and ${\rm dim}(\mathcal{B}((V_{\mu,i}^{\pm},c_{-1})))=2$, where $c_{-1}(v\otimes v)=-v\otimes v.$
\end{enumerate}
\begin{proof}
It is well-known that this claim holds. For~$(2)$, the Nichols algebra is of type $A_{1}$.
\end{proof}
\end{lemma}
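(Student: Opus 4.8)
The plan is to read off $\mathcal{B}(V_{\mu,i}^{\pm})$ directly from its graded pieces $V^{\otimes n}/\ker\Omega_{n}$, using the quantum symmetrizers $\Omega_{n}$ recalled after Definition 2.5. Write $V=V_{\mu,i}^{\pm}=\Bbbk v$ and let $q\in\{1,-1\}$ be the scalar with $c(v\otimes v)=q\,v\otimes v$; these are precisely the two braidings $c_{\pm 1}$ listed in row $(S0)$ of Table 2. Since $\dim V=1$, each tensor power $V^{\otimes n}$ is one-dimensional with basis $v^{\otimes n}$, so every operator assembled from the braidings — in particular each $\Omega_{n}\in\mathrm{End}_{\Bbbk}(V^{\otimes n})$ — acts on $V^{\otimes n}$ as multiplication by a scalar $\omega_{n}$. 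Consequently $\ker\Omega_{n}$ is either $0$ or all of $V^{\otimes n}$, and $\dim\mathcal{B}^{n}(V)=1$ exactly when $\omega_{n}\neq 0$ and $=0$ otherwise. Thus the whole computation collapses to deciding for which $n$ the scalar $\omega_{n}$ vanishes.

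First I would compute $\omega_{n}$ by unwinding the recursion. Each $c_{i}$ acts on $v^{\otimes n}$ by $q$, so $\Omega_{n-1,1}=\mathrm{id}+c_{n-1}+\cdots+c_{n-1}\cdots c_{1}\in\mathrm{End}_{\Bbbk}(V^{\otimes n})$ acts by $1+q+\cdots+q^{n-1}=:[n]_{q}$. Feeding this into $\Omega_{n}=(\Omega_{n-1}\otimes\mathrm{id})\Omega_{n-1,1}$ and noting that $\Omega_{n-1}\otimes\mathrm{id}$ acts on $v^{\otimes n}$ by the scalar $\omega_{n-1}$ (it is $\Omega_{n-1}$ on the first $n-1$ factors and $\mathrm{id}$ on the last), I obtain the recursion $\omega_{n}=\omega_{n-1}[n]_{q}$ with $\omega_{1}=1$. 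A check against $\Omega_{2}=\mathrm{id}+c$ gives $\omega_{2}=1+q$, as it should. Hence $\omega_{n}=[n]_{q}!:=\prod_{k=1}^{n}[k]_{q}$, the $q$-factorial.

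It then remains to evaluate this at $q=\pm 1$. For $q=1$ (the braiding $c_{1}$) one has $[k]_{1}=k$, so $\omega_{n}=n!\neq 0$ for all $n$ because $\Bbbk$ has characteristic zero; every graded piece is one-dimensional, and since $\mathcal{B}(V)$ is generated by $V$ as an algebra and all $\ker\Omega_{n}$ vanish, it is the free algebra on one generator, namely $\Bbbk[v]$, proving $(1)$. For $q=-1$ (the braiding $c_{-1}$) one has $[2]_{-1}=1+(-1)=0$, whence $\omega_{n}=0$ for all $n\geq 2$; thus $\mathcal{B}^{n}(V)=0$ for $n\geq 2$ while $\mathcal{B}^{0}(V)=\Bbbk$ and $\mathcal{B}^{1}(V)=V$. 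In particular $v^{2}=0$ already in degree two, so $\mathcal{B}(V)\simeq\Bbbk[v]/\langle v^{2}\rangle=\bigwedge\Bbbk\langle v\rangle$ is two-dimensional of type $A_{1}$, proving $(2)$. There is no serious obstacle in this argument; the only place the standing hypotheses are genuinely used is the non-vanishing $n!\neq 0$ in the case $q=1$, which is exactly where characteristic zero enters and prevents the polynomial algebra from truncating.
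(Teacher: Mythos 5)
Your proof is correct. It is worth noting, though, that the paper does not actually argue this lemma at all: its ``proof'' is a one--line appeal to the fact that rank--one Nichols algebras are well known (they are the classical examples $\Bbbk[v]$ when $q=1$, and the truncated algebras $\Bbbk[v]/\langle v^{N}\rangle$ when $q$ is a primitive $N$-th root of unity, here $N=2$). What you do differently is make this self-contained: since $\dim V=1$, every quantum symmetrizer $\Omega_{n}$ from Remark~2.6 acts by a scalar, you identify that scalar as the $q$-factorial $[n]_{q}!$ via the recursion $\Omega_{n}=(\Omega_{n-1}\otimes\mathrm{id})\Omega_{n-1,1}$, and then the dichotomy $\ker\Omega_{n}=0$ versus $\ker\Omega_{n}=V^{\otimes n}$ reads off both cases at once, with characteristic zero entering exactly where it must (non-vanishing of $n!$ for $q=1$). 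Your route costs a short computation but buys a complete argument from the definition recalled in the paper, and it makes transparent why case $(1)$ gives the polynomial algebra and case $(2)$ truncates in degree $2$; the paper's route is shorter but presupposes the reader already knows the rank--one classification. Your identification of the recursion scalar is consistent with the paper's indexing conventions ($\Omega_{n-1,1}\in\mathrm{End}_{\Bbbk}(V^{\otimes n})$ has $n$ summands, acting by $[n]_{q}$), so there is no gap.
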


Second, we give the Nichols algebras of Yetter-Drinfel'd modules that appear in ~$(S1)$. Denote by $(W_{\mu,i,\pm}^{\pm},{c_{i}})~(i=0,1,2,3)$ the simple Yetter-Drinfel'd modules of $H$ with braiding matrices
$$q_{1}=\left(\begin{array}{ll}\xi^{i}& \xi^{i}  \\
-\xi^{i} & \xi^{i}\end{array}\right),~~~q_{2}=\left(\begin{array}{ll}\xi^{i}& -\xi^{i}  \\
\xi^{i} & \xi^{i}\end{array}\right).$$
\begin{lemma}\label{lemma5.2}
Then Nichlos algebras of $(W_{\mu,i,\pm}^{\pm},c_{i})~(i=0,1,2,3)$ are as follows,
\begin{enumerate}
\item If $i=1~\text{or}~3,~q=q_{1}$, then $\mathcal{B}((W_{\mu,i,\pm}^{\pm},c_{i}))\simeq T(W_{\mu,i,\pm}^{\pm})/\langle vw\mp \xi^{i} wv,wv\pm \xi^{i} vw,v^{4},w^{4}\rangle$, and ${\rm dim}(\mathcal{B}((W_{\mu,i,\pm}^{\pm},c_{i})))=16$.
\item If  $i=1~\text{or}~3,~q=q_{2}$, then $\mathcal{B}((W_{\mu,i,\pm}^{\pm},c_{i}))\simeq T(W_{\mu,i,\pm}^{\pm})/\langle vw\pm \xi^{i} wv,wv\mp \xi^{i} vw,v^{4},w^{4}\rangle$, and ${\rm dim}(\mathcal{B}((W_{\mu,i,\pm}^{\pm},c_{i})))=16$.
\item If $i=2,~q=q_{1},q_{2}$, then Nichols algebras of $(W_{\mu,i,\pm}^{\pm},c_{i})$ are 8 dimensional and of type $A_{2}$.
\item If $i=0,~q=q_{1},q_{2}$, then $\mathcal{B}((W_{\mu,i,\pm}^{\pm},c_{i}))$ are infinite dimensional.
\end{enumerate}
\begin{proof}

Let $i=1~\text{or}~3$, then we have $q_{ij}q_{ji}=1$ for all $i\neq j,$ and the order of $q_{ii}$ are all equal to 4. By the result of~(\cite{Andru98}, \cite[Example~27]{Andruskiewitsch}), we have
$\mathcal{B}((W_{\mu,i,\pm}^{\pm},c_{j}))\simeq T(W_{\mu,i,\pm}^{\pm})/\langle vw-q_{12}wv,wv-q_{21}v,v^{4},w^{4}\rangle$
Furthermore, $\{v^{a_{1}} w^{a_{2}}: 0 \leq a_{k} \leq 3\}$ is a basis of $\mathcal{B}(W_{\mu,i,\pm}^{\pm})$ and ${\rm dim}(\mathcal{B}((W_{\mu,i,\pm}^{\pm},c_{j})))=\Pi_{i=1,2}N^{i}=16.$ For (1),~(2), Nichols algebras are quantum planes. (3) follows from (\cite[Proposition 2.11]{{Grana}}), Nichols algebras are of $A_{2}$ type. For (4), it is easy to show that $\mathcal{B}((W_{\mu,i,\pm}^{\pm},c_{j}))$ are infinite  dimensional.
\end{proof}
\end{lemma}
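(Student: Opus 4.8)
The plan is to regard each module in $(S1)$ as a braided vector space of diagonal type and to read off its Nichols algebra from the two invariants $q_{11}=q_{22}$ and $q_{12}q_{21}$. First I would quote the braiding computation of the preceding lemma (TABLE $2$): on a suitable basis $\{v,w\}$ the braiding of $(W_{\mu,i,\pm}^{\pm},c_{i})$ is diagonal with matrix $q_{1}$ or $q_{2}$. In both cases the diagonal entries are $q_{11}=q_{22}=\xi^{i}$, while the off-diagonal product is $q_{12}q_{21}=-\xi^{2i}$; since $\xi^{2}=-1$ this equals $(-1)^{i+1}$. The classification of rank-two diagonal Nichols algebras then depends only on $\mathrm{ord}(\xi^{i})$ and on the sign $(-1)^{i+1}$, so everything reduces to a case analysis on $i \bmod 4$.

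For $i=1$ or $3$ one has $q_{12}q_{21}=1$ and $\mathrm{ord}(q_{11})=\mathrm{ord}(q_{22})=4$. This is the quantum linear space situation: I would apply the standard result (\cite{Andru98}, \cite[Example~27]{Andruskiewitsch}) that when $q_{12}q_{21}=1$ the two generators $q$-commute and $\mathcal{B}(V)\simeq T(V)/\langle vw-q_{12}wv,\,v^{4},\,w^{4}\rangle$, with PBW basis $\{v^{a}w^{b}:0\le a,b\le 3\}$ and hence $\dim\mathcal{B}(V)=4\cdot 4=16$. To recover the precise relations listed in $(1)$ and $(2)$ I would compute $\ker\Omega_{2}$ directly: on $\mathrm{span}(v\otimes w,\,w\otimes v)$ the operator $\Omega_{2}=\mathrm{id}+c$ has matrix $\left(\begin{smallmatrix}1 & q_{21}\\ q_{12} & 1\end{smallmatrix}\right)$, which is singular precisely because $q_{12}q_{21}=1$; its kernel gives the quadratic relation $wv=q_{21}vw$, and the two forms quoted in the statement are equivalent via $q_{12}q_{21}=1$ (equivalently $\xi^{2i}=-1$ for odd $i$).

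For $i=2$ we have $q_{11}=q_{22}=-1$ of order $2$ and $q_{12}q_{21}=-1$; since $(-1)^{a_{12}}=-1$ forces $a_{12}=a_{21}=-1$, the braiding is of Cartan type $A_{2}$. Here $v^{2}=w^{2}=0$ together with one quantum Serre relation cut out a Nichols algebra of dimension $2^{|\Phi^{+}|}=2^{3}=8$ by the Cartan-type classification (\cite[Proposition~2.11]{Grana}). For $i=0$ we have $q_{11}=1$, so $c(v\otimes v)=v\otimes v$ and $\Omega_{n}$ restricted to $\langle v^{\otimes n}\rangle$ is $n!\,\mathrm{id}\neq 0$ in characteristic zero; thus the subalgebra generated by $v$ is a polynomial ring $\Bbbk[v]\hookrightarrow\mathcal{B}(V)$ and $\mathcal{B}(V)$ is infinite-dimensional.

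The one place needing genuine care is the case $i=2$, $q_{11}=-1$: this is a small-order root of unity, where the generic PBW/dimension count for Cartan type can degenerate, so I would lean on the cited finite-dimensional classification for the value $8$ rather than on a bare Lusztig-type argument. The remaining three cases are routine once the invariants $q_{11}$ and $q_{12}q_{21}$ have been extracted from $q_{1}$ and $q_{2}$.
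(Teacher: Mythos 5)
Your proposal is correct and follows essentially the same route as the paper: extract the diagonal invariants $q_{11}=\xi^{i}$ and $q_{12}q_{21}=(-1)^{i+1}$, invoke the quantum linear space result of \cite{Andru98}, \cite[Example~27]{Andruskiewitsch} for $i=1,3$, cite \cite[Proposition 2.11]{Grana} for the $A_{2}$ case $i=2$, and argue infinite-dimensionality directly for $i=0$. Your added details (the explicit $\ker\Omega_{2}$ computation and the observation that $\Omega_{n}$ acts by $n!$ on $v^{\otimes n}$ when $q_{11}=1$) merely flesh out steps the paper leaves terse.
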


Next, we give the Nichols algebras of Yetter-Drinfel'd modules that appear in $(S2)$.
\begin{lemma}\label{lemma5.3}
Let $X_{\mu,i}^{\pm}~(i=0~,1)$ be simple Yetter-Drinfel'd modules of $H$ with braiding matrices
  $$q=\left(\begin{array}{ll}\xi^{i}& \xi^{i} \\
\xi^{i}& \xi^{i}\end{array}\right)~(i=0,~2).$$
Then Nichlos algebras of $(X_{\mu,i}^{\pm},c_{i})$ are as follows
\begin{enumerate}
\item If $i=0$, then $\mathcal{B}((X_{\mu,i}^{\pm},c_{i}))\simeq S(X_{\mu,i}^{\pm})$.
\item If $i=2$, then $\mathcal{B}((X_{\mu,i}^{\pm},c_{i}))\simeq \bigwedge(X_{\mu,i}^{\pm}),$ and $\rm dim(\mathcal{B}((X_{\mu,i}^{\pm},c_{i})))=4.$
\end{enumerate}
\begin{proof}
The claims (1), (2) follow from (\cite[example 31]{Andruskiewitsch}). For (2), the Nichols algebra is of type $A_{1}\times A_{1}$ and $\mathcal{B}((X_{\mu,i}^{\pm},c_{i}))$ is a quantum plane.
\end{proof}
\end{lemma}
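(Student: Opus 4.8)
The plan is to pin down the braiding $c$ on $X_{\mu,i}^{\pm}$ explicitly, recognize it as $\pm$ the ordinary flip, and then read off the Nichols algebra from the two classical prototypes. The key simplification, which I would exploit throughout, is that these modules are of ``$\zeta$-type'': the functionals $\chi_{x^{k}y^{l}}$ all act as zero, so the comodule is supported on the group-like part of $\Bbbk^{G}$ and the interchanging action of $t$ will turn out to be irrelevant to $c$.

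First I would compute the left-left Yetter-Drinfel'd structure. Passing from the $\mathcal{D}(H)$-module $X_{\mu,i}^{\pm}$ to a left-left module via Remark~\ref{remark2.4}(1), the comodule is read off from the $H^{*}$-action: since every $\chi_{x^{k}y^{l}}$ acts as zero while the $\zeta_{x^{k}y^{l}}$ act diagonally on the basis $\{v,w\}$, each basis vector is a joint $\zeta$-eigenvector, whence $\rho(v)=v\otimes\eta_{v}$ and $\rho(w)=w\otimes\eta_{w}$ with $\eta_{a}=\sum_{g}\psi_{a}(g)e_{g}$ a group-like element of $\Bbbk^{G}\subseteq H$, corresponding to a character $\psi_{a}$ of $G$. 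Consequently $\rho_{l}(a)=\mathcal{S}(\eta_{a})\otimes a$ is diagonal, and the braiding of Remark~\ref{remark2.4}(2) becomes $c(a\otimes b)=(\mathcal{S}(\eta_{a})\cdot b)\otimes a$. As $\mathcal{S}(\eta_{a})\in\Bbbk^{G}$ acts on $b\in V_{\mu}$ through the single idempotent $e_{\mu}$, we obtain $c(a\otimes b)=\psi_{a}(\mu)^{-1}\,b\otimes a$, a diagonal braiding whose matrix is exactly $q=\left(\begin{smallmatrix}\xi^{i}&\xi^{i}\\ \xi^{i}&\xi^{i}\end{smallmatrix}\right)$ of TABLE~2, with $\xi^{i}\in\{1,-1\}$.

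There are then two cases. If $\xi^{i}=1$ (the case $i=0$), then $c$ is the ordinary flip $a\otimes b\mapsto b\otimes a$; the maps $\Omega_{n}$ are the usual symmetrizers, so $V^{\otimes n}/\ker\Omega_{n}\simeq S^{n}(V)$ and hence $\mathcal{B}(X_{\mu,i}^{\pm})\simeq S(X_{\mu,i}^{\pm})$, the polynomial algebra, which is infinite dimensional. If $\xi^{i}=-1$ (the case $i=2$), then $c(a\otimes b)=-\,b\otimes a$; from $\Omega_{2}=\mathrm{id}+c$ one reads off the relations $v^{2}=w^{2}=0$ and $vw+wv=0$, while $\Omega_{n}=0$ for $n\geq 3$ because $\bigwedge^{n}V=0$ once $n>\dim V=2$. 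Thus $\mathcal{B}(X_{\mu,i}^{\pm})\simeq\bigwedge(X_{\mu,i}^{\pm})$, of dimension $1+2+1=4$; here $q_{11}=q_{22}=-1$ has order $2$ and $q_{12}q_{21}=1$, so this Nichols algebra is of Cartan type $A_{1}\times A_{1}$ and is a quantum plane.

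The only genuine computation is the first step, namely verifying that the braiding is precisely $\xi^{i}$ times the flip; once that is in hand the two conclusions are the standard descriptions of the Nichols algebras of the flip and of its negative, which may equally be quoted from \cite[Example~31]{Andruskiewitsch}. I do not expect a real obstacle: the delicate point is merely bookkeeping the character values $\psi_{a}(\mu)$ so as to land on all-$1$ versus all-$(-1)$, and the observation that makes the braiding diagonal in spite of $t$ swapping $v$ and $w$ is exactly that $c$ depends only on the group-like comodule grading in $\Bbbk^{G}$ and not on the $t$-action.
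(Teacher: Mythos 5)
Your proposal is correct, and it takes a more self-contained route than the paper, whose entire proof is a citation of \cite[Example 31]{Andruskiewitsch} together with the remark that case (2) is of Cartan type $A_{1}\times A_{1}$. Concretely, you do two extra pieces of work. First, you re-derive the braiding: the paper's braiding lemma (TABLE 2) records the $(S2)$ braidings but only verifies case $(S3)$ in print, whereas you show directly that, since every $\chi_{x^{k}y^{l}}$ acts by zero, the coaction is $\rho(a)=a\otimes\eta_{a}$ with $\eta_{a}=\sum_{g}\psi_{a}(g)e_{g}$ group-like in $\Bbbk^{G}$, hence $c(a\otimes b)=\psi_{a}(\mu)^{-1}\,b\otimes a$ is $\pm$ the ordinary flip; this is exactly the hypothesis the lemma takes for granted, and your derivation also explains why the swap action of $t$ never enters $c$. (To conclude the matrix is constant you should note explicitly that $\psi_{v}(\mu)=\psi_{w}(\mu)$, which holds because $\zeta_{y}$ and $\zeta_{x^{2}}$ act by scalar matrices on every $(S2)$ module; since the lemma assumes the braiding matrix, this is not a gap in the proof of the lemma itself.) Second, where the paper quotes Example 31, you prove the two prototype facts: for $c$ the flip, the maps $\Omega_{n}$ are the classical symmetrizers, so $V^{\otimes n}/\ker\Omega_{n}\simeq S^{n}(V)$ in characteristic zero and $\mathcal{B}(V)\simeq S(V)$; for $c$ minus the flip, the $\Omega_{n}$ are antisymmetrizers, which vanish identically for $n>\dim V=2$, giving $\mathcal{B}(V)\simeq\bigwedge V$ of dimension $1+2+1=4$, with $q_{11}=q_{22}=-1$ and $q_{12}q_{21}=1$ yielding Cartan type $A_{1}\times A_{1}$, in agreement with the paper. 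What the citation buys the paper is brevity; what your computation buys is independence from the reference and a verification of braiding data that the paper leaves implicit.
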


We are in a position to give the Nichols algebras of Yetter-Drinfel'd modules that appear in~$(S3)$ now. There are some Nichlos algebras of non-diagonal types which were also studied by Andruskiewitsch~\cite{AG}.
\begin{lemma}\label{theorem5.4}
Let $r,p,m\in \Bbbk$, assume $(Y_{\mu,j}, c_{r,p,m})~(j=0,1,2,3,4,5,6,7,8~, r,p,m\in \Bbbk)$ be simple Yetter-Drinfel'd modules of $H$ with braidings
 \begin{center}
 $c(v\otimes v)=r w\otimes w, c(v\otimes w)=p v\otimes w,$\\
 $c(w\otimes v)=p w\otimes v,c(w\otimes w)=m v\otimes v.$\\
 \end{center}
The Nichlos algebras of $(Y_{\mu,j}, c_{r,p,m})$ are not of diagonal type and given as follow:
\begin{enumerate}
\item If $p=-1, rm=-1$, then
 $$\mathcal{B}((Y_{\mu,j}, c_{r,p,m}))\cong T(Y_{\mu,j})/\langle vw,wv,v^{4}+w^{4}\rangle,~~{\rm dim}(\mathcal{B}((Y_{\mu,j}, c_{r,p,m})))=8.$$
\item  If $p=1$, then ${\rm dim}(\mathcal{B}((Y_{\mu,j}, c_{r,p,m})))=\infty.$
\item If $rm=1,~p=\pm i$ and $r=-i$, then
$$\mathcal{B}((Y_{\mu,j}, c_{r,p,m})) \cong  T(Y_{\mu,j})/ \langle w^{2}+i v^{2}, vwvw,wvwv \rangle,~~{\rm dim}(\mathcal{B}((Y_{\mu,j}, c_{r,p,m})))=16.$$
\item If $rm=1,~p=\pm i,~r=i$, then
$$\mathcal{B}((Y_{\mu,j}, c_{r,p,m})) \cong  T(Y_{\mu,j})/\langle w^{2}-i v^{2}, vwvw,wvwv \rangle,~~{\rm dim}(\mathcal{B}((Y_{\mu,j}, c_{r,p,m})))=16.$$
\end{enumerate}
\end{lemma}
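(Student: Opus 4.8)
The plan is to read every relation off the quantum symmetrizers $\Omega_n$, using the description $\mathcal{B}(Y_{\mu,j})=\Bbbk\oplus Y_{\mu,j}\oplus\bigoplus_{n\ge 2}Y_{\mu,j}^{\otimes n}/\ker\Omega_n$ recalled above. First I would record the braiding $c$ on the four-dimensional space $Y_{\mu,j}^{\otimes 2}$ in the ordered basis $(v\otimes v,\,v\otimes w,\,w\otimes v,\,w\otimes w)$; by hypothesis it is block diagonal, acting as $p\cdot\mathrm{id}$ on $\Bbbk\langle v\otimes w,\,w\otimes v\rangle$ and as $\left(\begin{smallmatrix}0 & m\\ r & 0\end{smallmatrix}\right)$ on $\Bbbk\langle v\otimes v,\,w\otimes w\rangle$. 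Hence the spectrum of $c$ is $\{p,\,p,\,\sqrt{rm},\,-\sqrt{rm}\}$, and since $\ker\Omega_2=\ker(\mathrm{id}+c)$ is exactly the $(-1)$-eigenspace, the quadratic relations are governed entirely by which of these eigenvalues equals $-1$. This split is what produces the four cases of the statement.

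For the degree-two relations: when $p=-1$ the vectors $v\otimes w$ and $w\otimes v$ lie in $\ker\Omega_2$, giving $vw=wv=0$, while $rm=-1$ forces the other block to have eigenvalues $\pm i\neq -1$, so these are the only quadratic relations (case (1)). When $rm=1$ and $p=\pm i$, the $(-1)$-eigenvector of $\left(\begin{smallmatrix}0 & m\\ r & 0\end{smallmatrix}\right)$ produces the single quadratic relation displayed in (3) and (4), according to the value of $r$, and $p\neq -1$ leaves $vw,wv$ unconstrained in degree two. When $p=1$ (case (2)) we have $rm=-1$, so the second block contributes $\pm i$ and none of the eigenvalues equals $-1$; thus $\ker\Omega_2=0$ and there are no quadratic relations at all.

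Next I would pass to the higher symmetrizers through the recursion $\Omega_{n,1}=\mathrm{id}+c_n\Omega_{n-1,1}$ and $\Omega_n=(\Omega_{n-1}\otimes\mathrm{id})\Omega_{n,1}$. In case (1) I work inside the pre-Nichols algebra $T(Y_{\mu,j})/\langle vw,wv\rangle$, whose monomials are the powers $v^k,w^k$; here $\ker\Omega_3$ contributes nothing new, whereas $\ker\Omega_4$ yields the single quartic relation $v^4+w^4=0$. Since $vw=wv=0$ this forces $v^5=w^5=0$, and once one checks $v^4=-w^4\neq 0$ the basis is $\{1,v,v^2,v^3,w,w^2,w^3,v^4\}$, so $\dim=8$. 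In cases (3) and (4) the quadratic relation is $w^2=mv^2$; computing $\ker\Omega_3$ shows there are no cubic relations, and $\ker\Omega_4$ produces $vwvw=wvwv=0$. These relations cut $T(Y_{\mu,j})$ down to a finite-dimensional quotient, and I would confirm $\dim=16$ by exhibiting a PBW-type basis and, as an independent check, by identifying the braided vector space with the corresponding non-diagonal example of Andruskiewitsch--Gra\~na~\cite{AG}.

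The hardest step is precisely this last point: proving that the listed relations are complete and that the proposed bases are linearly independent, i.e. that $\dim\ker\Omega_n$ is fully accounted for in every degree up to the top. For the finite cases the cleanest route is to match the braiding to the relevant algebra of~\cite{AG} and transport its known Hilbert series, which simultaneously pins down the dimension and rules out hidden higher relations. The genuinely separate difficulty is case (2): with $\ker\Omega_2=0$ there are no low-degree relations to exploit, and one must instead exhibit an infinite linearly independent family in $\mathcal{B}(Y_{\mu,j})$—for instance by showing $\dim\bigl(Y_{\mu,j}^{\otimes n}/\ker\Omega_n\bigr)\neq 0$ for all $n$—or again invoke the classification of~\cite{AG} to conclude that this braiding falls outside the finite-dimensional list, whence $\dim\mathcal{B}(Y_{\mu,j})=\infty$.
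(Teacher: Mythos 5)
Your route for the finite-dimensional cases ultimately coincides with the paper's proof: the paper disposes of (1), (3), (4) by quoting \cite[Proposition 3.16]{AG}, which is exactly your ``cleanest route'' of matching the braiding to Andruskiewitsch--Giraldi and transporting the known Hilbert series. Your preliminary eigenvalue analysis of $\Omega_2=\mathrm{id}+c$ is correct and a nice supplement, but note what it actually yields when carried out: the kernel of $\mathrm{id}+c$ on $\Bbbk\langle v\otimes v, w\otimes w\rangle$ is spanned by $w\otimes w-m\,v\otimes v$, i.e. the relation $w^2=mv^2$, which reads $w^2-iv^2$ when $r=-i$ and $w^2+iv^2$ when $r=i$ --- the opposite pairing to the one displayed in (3) and (4). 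So you cannot simply assert your computation ``produces the relation displayed''; you would have to reconcile conventions with \cite{AG} (or flag the pairing in the statement), rather than take it for granted.

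There are two genuine gaps. First, you never address the assertion that these braidings are \emph{not of diagonal type}; this is part of the statement and is not automatic, since one must exclude every basis that might diagonalize $c$. The paper settles it by citing \cite[Remark 3.5]{AG}: $(Y_{\mu,j},c_{r,p,m})$ is of diagonal type if and only if $p^{2}=rm$, which fails in all the listed cases. Second, and more seriously, case (2) is left as a plan rather than a proof: you state that one must ``exhibit an infinite linearly independent family \dots or invoke the classification of \cite{AG}'', but you produce no such family, and the appeal to \cite{AG} is unverified (nothing you cite asserts infinite-dimensionality for $p=1$; the paper's authors evidently found a direct argument necessary). The paper's proof of (2) is precisely the computation you defer: take the alternating word $a_n=v\otimes w\otimes v\otimes w\otimes\cdots$ of length $n$. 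Since $p=1$, the braiding fixes both $v\otimes w$ and $w\otimes v$, and every adjacent pair in $a_n$ is mixed, so each $c_i$ fixes $a_n$; hence every summand of the quantum symmetrizer fixes $a_n$ and $\Omega_n(a_n)=k\,a_n$ with $k$ a positive integer (in fact $n!$), which is nonzero in characteristic $0$. Therefore $a_n\notin\ker\Omega_n$ and $\mathcal{B}^{n}(Y_{\mu,j})\neq 0$ for all $n$, proving (2). Without this (or an equivalent) explicit witness, your argument is incomplete exactly in the one case that the citation to \cite{AG} does not cover.
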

\begin{proof}
  Since $(Y_{\mu,j}, c_{r,p,m})$ is diagonal type if and only if $p^{2}=rm$~(\cite[Remark 3.5]{AG}), it follows that the braidings are not of diagonal type. (1),~(3),~(4) are the results in~(\cite[Proposition 3.16]{AG}). It is sufficient to prove (2), let $n\in \mathbb{N}$ and let
  $$a_n=\underbrace{v\otimes w\otimes v \otimes w \otimes v \otimes w \otimes v\otimes w\cdots }_{n}$$
  be a vector in $\mathcal{B}((Y_{\mu,j}, c_{r,1,m})).$ After direct computation, we have $\Omega_{n}(a_n)=ka_n$ for some $k\not =0\in \Bbbk$. Thus, we have completed the proof.
\end{proof}

Finally, we give the Nichols algebras of Yetter-Drinfel'd modules that appear in $(S4)$.
\begin{lemma}\label{lemma5.5}
Let $(Z_{\mu,j},c_{i})~(j=1,2,3,4,5,6,7,8;~i=0,1,2,3)$ be simple Yetter-Drinfel'd modules over $H$ with braiding matrices
$$q=\left(\begin{array}{ll}\xi^{i}& -\xi^{i}  \\
 \xi^{i} &  \xi^{i}\end{array}\right).$$

Then the Nichols algebras of $(Z_{\mu,j},c_{i})$ are as follows,
\begin{enumerate}
\item If $i=1\text{or}~3$, then $\mathcal{B}((Z_{\mu,j},c_{i}))\simeq T(Z_{\mu,j})/\langle vw\pm \xi^{i} wv,wv\mp \xi^{i}vw,v^{4},w^{4}\rangle$ and
$${\rm dim}(\mathcal{B}((Z_{\mu,j},c_{i})))=16.$$
\item If $i=2$, then $\mathcal{B}((Z_{\mu,j},c_{i}))\simeq T(Z_{\mu,j})/ \langle vw-wv,v^{2},w^{2}\rangle $  and
$${\rm dim}(\mathcal{B}((Z_{\mu,j},c_{i})))=8.$$
\item If $i=0$, then $\mathcal{B}((Z_{\mu,j},c_{i}))$ is infinite dimensional.
\end{enumerate}
\begin{proof}
This claim is similar as ~Lemma~\ref{lemma5.2}.
\end{proof}
\end{lemma}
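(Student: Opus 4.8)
The plan is to reduce the entire statement to the rank-two diagonal-type analysis already performed for $(S1)$ in Lemma~\ref{lemma5.2}, exploiting that the braiding matrix $q$ recorded here is identical to the matrix $q_2$ of that lemma. First I would fix for each $Z_{\mu,j}$ the ordered basis $\{v,w\}$ and read off, exactly as in the $(S3)$ computation, the associated left-left comodule structure; plugging it into $c_{V,V}(u\otimes u')=u_{-1}\cdot u'\otimes u_0$ shows that the braiding is of diagonal type,
$$c(v\otimes v)=\xi^i v\otimes v,\quad c(v\otimes w)=-\xi^i w\otimes v,\quad c(w\otimes v)=\xi^i v\otimes w,\quad c(w\otimes w)=\xi^i w\otimes w,$$
so that $(Z_{\mu,j},c_i)$ is a braided vector space of diagonal type with matrix $q=\left(\begin{smallmatrix}\xi^i&-\xi^i\\ \xi^i&\xi^i\end{smallmatrix}\right)$. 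Since $q$ depends only on $i$, the Nichols algebra is governed by the two invariants $q_{11}=q_{22}=\xi^i$ and $q_{12}q_{21}=-\xi^{2i}$, and the statement splits uniformly over the $16$ modules according to the value of $i$.

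For $i=1$ or $i=3$ I would note $\mathrm{ord}(q_{11})=\mathrm{ord}(q_{22})=4$ while $q_{12}q_{21}=-\xi^{2i}=1$, the defining condition of a quantum linear space; by~(\cite{Andru98},~\cite[Example~27]{Andruskiewitsch}) the Nichols algebra is the quantum plane generated by $v,w$ with the $q$-commutation relation $wv=\xi^i vw$ (up to sign) together with $v^4=w^4=0$, whose PBW basis $\{v^aw^b:0\le a,b\le 3\}$ has $16$ elements. For $i=0$ one has $q_{11}=1$, so the braided subalgebra generated by $v$ is the polynomial algebra $\Bbbk[v]$ and no power of $v$ is killed by the symmetrizers $\Omega_n$; hence $\mathcal{B}(Z_{\mu,j})$ is infinite-dimensional. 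These two branches repeat verbatim the arguments of Lemma~\ref{lemma5.2}(1),(2),(4) and require only the elementary analysis of $\ker\Omega_2$.

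The remaining case $i=2$ is the genuine content of the lemma and the step I expect to be the main obstacle. Here $q_{11}=q_{22}=-1$ has order $2$ and $q_{12}q_{21}=-1\neq 1$, so the generalized Dynkin diagram is connected and imposing only $v^2=w^2=0$ would leave an infinite-dimensional algebra; a higher-degree relation must enter. I would identify the datum $(q_{ii}=-1,\,q_{12}q_{21}=-1)$ with Cartan type $A_2$---the root vector $[v,w]_c$ has self-braiding $q_{11}q_{12}q_{21}q_{22}=-1$, contributing a third positive root of order $2$---and invoke~\cite[Proposition~2.11]{Grana} to conclude that $\mathcal{B}((Z_{\mu,j},c_2))$ is of type $A_2$ and of dimension $2^3=8$, in parallel with Lemma~\ref{lemma5.2}(3); the defining relations are then read off from the $A_2$ presentation. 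The difficulty is concentrated entirely here: establishing finiteness and the exact dimension requires the full diagonal-type classification (three positive roots each contributing a factor $2$) rather than the kernel-of-$\Omega_2$ bookkeeping that closes the other cases.
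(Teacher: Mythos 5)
Your proposal is correct and follows essentially the same route as the paper, whose entire proof is a deferral to Lemma~\ref{lemma5.2}: you verify the braiding is diagonal with the stated matrix, then split on $i$, invoking the quantum linear space result (\cite{Andru98}, \cite[Example~27]{Andruskiewitsch}) for $i=1,3$ since $q_{12}q_{21}=-\xi^{2i}=1$, Gra\~{n}a's \cite[Proposition~2.11]{Grana} for the Cartan type $A_2$ case $i=2$ (three positive roots of order $2$, giving dimension $2^3=8$), and the $q_{11}=1$ obstruction for $i=0$, exactly as in the proof of Lemma~\ref{lemma5.2}. Your observation that $v^2=w^2=0$ alone cannot present the $i=2$ Nichols algebra is sound and in fact correctly handles the one point where the lemma's displayed presentation $\langle vw-wv,v^2,w^2\rangle$ is at odds with the claimed dimension $8$.
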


With these preparations, we are in a position to classify all finite-dimensional Nichols algebras of simple Yetter-Drinfel'd modules $V$ over $H$ now.
 \begin{theorem}\label{theorem5.6}
Let $V$ be the simple Yetter-Drinfel'd modules over $H$, all finite dimensional Nichols algebras of $V$ are as follows,
 \begin{enumerate}
 \item Diagonal type: $A_{1}, A_{2}$ or quantum planes.
 \item Non-diagonal type:  Nichols algebras $\mathcal{B}(V)$ are $8$ or $16$ dimensional.
 \end{enumerate}
 \begin{proof}
 Let $V\in{}{ }_{H}^{H} \mathcal{Y}\mathcal{D}$ be the braided vector spaces of diagonal type, due to Lemmas~\ref{lemma5.1},~\ref{lemma5.2},~\ref{lemma5.3},\\
 \ref{lemma5.5}, it follows that Nichols algebras of $V$ are finite dimensional if and only if $\mathcal{B}(V)$ are of Cartan type $A_{1}, A_{2}$ or quantum planes.
Let $V\in{ }_{H}^{H} \mathcal{Y}\mathcal{D}$ be the braided vector spaces of non-diagonal type, due to Lemma~\ref{theorem5.4}, it follows that Nichols algebras of $V$ are finite dimensional if and only if $p=-1,~rm=-1$ or $rm=1,~p=\pm \xi$. i.e. Nichols algebras $\mathcal{B}(V)$ are 8 or 16 dimensional.
 \end{proof}
 \end{theorem}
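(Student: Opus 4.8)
The plan is to treat Theorem~\ref{theorem5.6} not as a fresh argument but as an assembly of the data already accumulated: the complete list of simple objects of ${}_{H}^{H}\mathcal{Y}\mathcal{D}$ is provided by Theorem~\ref{theorem4.7}, their braidings are recorded in Tables~2 and~3, and the Nichols algebra of each family has been computed in Lemmas~\ref{lemma5.1}--\ref{lemma5.5}. Thus the first step is simply to invoke Theorem~\ref{theorem4.7} in order to reduce the statement to the five families $(S0)$--$(S4)$; the fact that this list is exhaustive is already secured there by the dimension count $32\cdot 1^{2}+56\cdot 2^{2}=256=\dim\mathcal{D}(H)$.

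The second step is to split the families by the type of their braiding. Using the criterion that $c_{r,p,m}$ is of diagonal type precisely when $p^{2}=rm$ (\cite[Remark 3.5]{AG}), one reads off from Tables~2 and~3 that $(S0)$, $(S1)$, $(S2)$ and $(S4)$ are diagonal, whereas $(S3)$ is non-diagonal (there $p^{2}=-1$ while $rm=1$). For the diagonal families I would extract the finite-dimensional cases directly from the relevant lemmas and observe that each allowed type occurs: Lemma~\ref{lemma5.1} contributes $A_{1}$ from $(S0)$; Lemma~\ref{lemma5.2} contributes the $16$-dimensional quantum planes ($i=1,3$) and the $8$-dimensional algebra of type $A_{2}$ ($i=2$) from $(S1)$, with $i=0$ infinite-dimensional; Lemma~\ref{lemma5.3} contributes the $4$-dimensional quantum plane of type $A_{1}\times A_{1}$ from $(S2)$; and Lemma~\ref{lemma5.5} contributes, from $(S4)$, the $16$-dimensional quantum planes ($i=1,3$) and the $8$-dimensional type-$A_{2}$ algebra ($i=2$). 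Collecting these outcomes shows that the finite-dimensional diagonal Nichols algebras are exactly $A_{1}$, $A_{2}$, or quantum planes, which is part~(1).

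The third step treats the non-diagonal family $(S3)$ by applying Lemma~\ref{theorem5.4}. Here the work is to match each of the sixteen modules $Y_{\mu,j}$ against the four parameter regimes of that lemma: reading the triple $(r,p,m)$ off Table~3 shows that $p$ is always one of $\pm 1,\pm\xi$, so each module lands in exactly one case. The regime $p=1$ gives infinite dimension, while $p=-1,\ rm=-1$ gives dimension $8$ and $rm=1,\ p=\pm\xi$ gives dimension $16$. Hence the finite-dimensional non-diagonal Nichols algebras are $8$- or $16$-dimensional, which is part~(2), and combining the two parts finishes the proof.

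The main obstacle is not conceptual but one of careful bookkeeping. One must be certain that the braiding data of Tables~2 and~3 faithfully realise the parameter ranges appearing in Lemmas~\ref{lemma5.2}--\ref{lemma5.5}, so that no finite-dimensional case is missed and no type outside the claimed list is introduced. The most delicate point is $(S3)$: one has to confirm that across all sign choices and both $\mu\in\{x,x^{3}\}$ the triple $(r,p,m)$ never leaves the union of the four cases of Lemma~\ref{theorem5.4}. Once this exhaustiveness is verified, the classification is forced and the theorem follows.
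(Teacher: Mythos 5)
Your proposal is correct and follows essentially the same route as the paper: reduce to the classified families $(S0)$--$(S4)$, read off the diagonal cases from Lemmas~\ref{lemma5.1}, \ref{lemma5.2}, \ref{lemma5.3}, \ref{lemma5.5}, and handle the non-diagonal family $(S3)$ via Lemma~\ref{theorem5.4}. The extra bookkeeping you emphasize (checking that the triples $(r,p,m)$ from Table~3 exhaust the parameter regimes of Lemma~\ref{theorem5.4}) is exactly what the paper's citation of that lemma implicitly relies on, so the two arguments coincide in substance.
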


\bibliographystyle{unsrt}

\begin{thebibliography}{99}
\setlength{\itemsep}{0em}
\bibitem{AA}  Abella, A., \textit{Some advances about the existence of compact involutions in semisimple Hopf algebras}, S\~{a}o Paulo J. Math. Sci. 13 (2019), no. 2, 628-651.
\bibitem{AG}Andruskiewitsch, N.; Giraldi, J. M. J., \textit{Nichols algebras that are quantum planes}, Linear Multilinear Algebra 66 (2018), no. 5, 961-991.
\bibitem{Andruskiewitsch}Andruskiewitsch, N., \textit{An introduction to Nichols algebras}, Quantization, geometry and noncommutative structures in mathematics and physics, 135-195. Math. Phys. Stud., Springer, Cham, 2017.
\bibitem{ARF} Andruskiewitsch, N.; Angiono, I.; Fiorela, R. B., \textit{Lie Algebras Arising from Nichols Algebras of Diagonal Type}, Int. Math. Res. Not. no. 0, (2021), 1-36.
\bibitem{Andru19} Andruskiewitsch, N.; Angiono, I.; Heckenberger, I., \textit{On finite GK-dimensional Nichols algebras of diagonal type}, Tensor categories and Hopf algebras, 1-23, Contemp. Math., 728, Amer. Math. Soc.
\bibitem{Andru17} Andruskiewitsch, N.; Angiono, I., \textit{On finite dimensional Nichols algebras of diagonal type}, Bull. Math. Sci. 7 (2017), 353-573.
\bibitem{Andru172} Andruskiewitsch, N.; Angiono, I.; Rossi B. F.,  \textit{The quantum divided power algebra of a finite-dimensional Nichols algebra of diagonal type}, Math. Res. Lett. 24 (2017), 619-643.
 \bibitem{Andru21} Andruskiewitsch, N.; Angiono, I.; Heckenberger, I., \textit{On finite GK-dimensional Nichols algebras over abelian groups}, Mem. Amer. Math. Soc. 271 (2021), no. 1329, ix+125 pp.
\bibitem{Andru98} Andruskiewitsch, N., \textit{Lifiting of quantum linear spaces and pointed Hopf algebra of order $p^{3}$}, J.Algebra. 209 (1998), 658-691.
\bibitem{Andru10}Andruskiewitsch, N.; Schneider, H. J., \textit{On the classification of finite-dimensional pointed Hopf algebras}, Ann. of Math. 171(2010), no. 1, 375-417.
\bibitem{AS}Andruskiewitsch, N.; Schneider, H. J., \textit{Pointed Hopf algebras}, In New directions in Hopf
algebras, volume 43 of Math. Sci. Res. Inst. Publ, pp.1-68. Cambridge Univ. Press, Cambridge, 2002.
\bibitem{Grana} Gra\~{n}a, M. \textit{On Nichols algebras of low dimension}, New trends in Hopf algebra theory (LaFalda,1999), 111-134, Contemp. Math., 267, Amer. Math. Soc., Providence, RI, 2000.
\bibitem{Hecken06} Heckenberger, I., \textit{The Weyl groupoid of a Nichols algebra of diagonal type,} Invent. Math. 164 (2006), 175-188.
\bibitem{Hecken07} Heckenberger, I.; Zheng, Y., \textit{Root multiplicities for Nichols algebras of diagonal type of rank two}, J. Algebra 496 (2018), 91-115.
\bibitem{Hecken09} Heckenberger, I., \textit{Classification of arithmetic root systems}, Adv. Math. 220 (2009), 59-124.
\bibitem{Hecken18} Heckenberger, I., \textit{Examples of finite-dimensional rank 2 Nichols algebras of diagonal type}, Compos. Math. 143 (2007), 165-190.
\bibitem{Kashina}  Kashina, Y., \textit{Classification of semisimple Hopf algebras of dimension}, J.Algebra. 232 (2000), 617-663.
\bibitem{Masuoka} Masuoka, A., \textit{Hopf algebra extensions and cohomology}, New directions in Hopf algebras, 167-209, Math. Sci. Res. Inst. Publ., 43, Cambridge Univ. Press, Cambridge, 2002.
\bibitem{Maji91} Majid, S., \textit{Doubles of quasitriangular Hopf algebras}, Comm. Algebra. 19 (1991), 3061-3073.
\bibitem{Mon1993} Montgomery, S., \textit{Hopf algebra and their actions on rings}, CBMS Lectures Vol. 82, Am. Math. Soc. Providence, RI, 1993.
\bibitem{Sch1996} Schauenburg, P., \textit{A characterization of the Borel-like subalgebras of quantum enveloping algebras}, Comm. Algebra, 24 (1996), 2811-2823.
\bibitem{Rad12} Radford, D. E., \textit{Hopf Algebras}, World Scientific, Series on Knots and Everything, 49. World Scientific Publishing Co. Pte. Ltd., Hackensack, NJ, 2012.
\bibitem{SA}Semikhatov, A. M.; Tipunin, I. Y., \textit{The Nichols algebra of screenings}, Commun. Contemp. Math. 14 (2002). no. 4, 1250029, 66 pp.
\bibitem{Shi} Shi, Y., \textit{Finite-dimensional Hopf algebras over the Kac-Paljutkin algebra $H_{8}$}, Rev. Un. Mat. Argentina 60 (2019), no. 1, 265-298.
\bibitem{TM} Takeuchi, M., \textit{Matched pairs of groups and bismash products of Hopf algebras}, Comm. Algebra 9 (1981), 841-882.
\bibitem{Zheng21} Zheng, Y.;  Gao, Y.;  Hu, N., \textit{Finite-dimensional Hopf algebras over the Hopf algebra $H_{b:1}$ of Kashina}, Jounrnal of Algebra. 567 (2021), 613-659.
\end{thebibliography}

\end{document}